\documentclass[11pt,a4paper]{article}
\usepackage[utf8]{inputenc}
\usepackage[T1]{fontenc}
\usepackage{lmodern}
\usepackage{amsmath,amssymb,amsthm,mathtools}
\numberwithin{equation}{section}
\usepackage[margin=2.54cm]{geometry}
\usepackage{microtype}
\usepackage{cite}
\usepackage{xcolor}
\usepackage{color}
\usepackage{thmtools,thm-restate}
\definecolor{mycitegreen}{HTML}{00A99D}
\usepackage[
hypertexnames=false,
  colorlinks=true,
  linkcolor={blue!85!black},
  citecolor={red!85!black},
  urlcolor=mycitegreen
]{hyperref}
\usepackage{cleveref}
\newtheorem{theorem}{Theorem}[section]
\newtheorem{proposition}[theorem]{Proposition}
\newtheorem{lemma}[theorem]{Lemma}
\newtheorem{corollary}[theorem]{Corollary}
\newtheorem{conjecture}[theorem]{Conjecture}
\theoremstyle{remark}

\newcommand{\dmin}{\delta^0}

\newcommand{\Prob}{\mathbb{P}}
\newcommand{\symdiff}{\mathbin{\triangle}}

\title{Degree conditions for $k$-strong orientations of digraphs}
\author{
J{\o}rgen Bang-Jensen$^{1,2}$  \qquad Shuo Wei$^{1}$ \\[1mm]
\small $^1$School of Mathematics, Shandong University, Jinan 250100, China\\
\small $^2$Department of Mathematics and Computer Science,\\
\small University of Southern Denmark, Odense DK-5230, Denmark\\
\small \href{mailto:jbj@imada.sdu.dk}{\texttt{jbj@imada.sdu.dk}} \qquad  \href{mailto:shuowei@mail.sdu.edu.cn}{\texttt{shuowei@mail.sdu.edu.cn}} 
}
\date{}

\begin{document}
\maketitle
\vspace{-2em}
\begin{abstract}
Jackson and Thomassen conjectured that every $2k$-strong digraph contains a spanning $k$-strong oriented subdigraph. We prove sharp degree conditions for the existence of such a subdigraph. For every fixed positive integer $k$ and all sufficiently large $n$, every $n$-vertex digraph $D$ with $\dmin(D)\ge\lfloor(n+k-1)/2\rfloor$ admits a $k$-strong orientation. This threshold is best possible even for the weaker conclusion that $D$ itself is $k$-strong. We also prove a sharp Woodall-type analogue for every fixed positive integer $k$ and all sufficiently large $n$: if $d_D^+(x)+d_D^-(y)\ge n+2k-2$ for every missing arc $xy$, then $D$ admits a $k$-strong orientation, and this bound is again best possible. As a further consequence, we determine the sharp minimum total degree threshold. Finally, the semi-degree result also remains valid when $k\le\alpha n$ for every fixed $0<\alpha<0.094882\ldots$.
\end{abstract}

\noindent\textbf{Keywords.} Digraphs; orientations; connectivity; highly connected orientations; semi-degree; Woodall-type condition; probabilistic method.

\noindent\textbf{2020 Mathematics Subject Classification.} 05C20, 05C40, 05C80.

\section{Introduction}\label{sc1}

Notation not explicitly defined in this paper is consistent with \cite{Joergen2}. All digraphs considered here are finite and loopless, with at most one arc in each direction between two distinct vertices. A \textbf{digon} is a pair of opposite arcs, and an \textbf{oriented graph} is a digraph with no digons. A digraph is \textbf{$k$-strong} if it has at least $k+1$ vertices and remains strongly connected after the deletion of any set of at most $k-1$ vertices. Similarly, a digraph is \textbf{$k$-arc-strong} if it remains strongly connected after the deletion of any set of at most $k-1$ arcs.

For a digraph $D$, an \textbf{orientation of $D$} is a spanning subdigraph obtained by deleting exactly one arc from each digon and retaining every arc outside the digons. An orientation of $D$ that is $k$-strong is called a \textbf{$k$-strong orientation of $D$}. Equivalently, $D$ admits a $k$-strong orientation if and only if it contains a spanning $k$-strong oriented subdigraph: any such subdigraph can be extended to an orientation of $D$, and $k$-strongness is preserved under the addition of arcs.

For an undirected graph, an orientation is called \textbf{$k$-connected} if the resulting digraph is $k$-strong. The corresponding orientation problems for edge- and arc-connectivity are classical. Robbins~\cite{Robbins} proved that an undirected graph has a strong orientation if and only if it is $2$-edge-connected, and Nash-Williams~\cite{NashWilliams} showed that it has a $k$-arc-strong orientation if and only if it is $2k$-edge-connected. Boesch and Tindell~\cite{Boesch} extended Robbins's theorem to mixed multigraphs, while Frank~\cite{Frank} obtained a general orientation theorem with prescribed lower bounds on the indegrees of vertex sets. In the directed setting, Jackson~\cite{Jackson} proved that every $2k$-arc-strong digraph has a $k$-arc-strong orientation. This also follows from a general orientation theorem by Frank \cite{Frank1996}.

The analogous problem for vertex-connectivity is substantially more difficult. Thomassen~\cite{Thomassen1989} asked whether, for every positive integer $k$, there is an integer $f(k)$ such that every $f(k)$-connected undirected graph has a $k$-strong orientation. Writing $f(k)$ for the least such integer, Jord\'an~\cite{Jordan} proved $f(2)\le18$, and Thomassen~\cite{Thomassen2015} later established $f(2)=4$. Garamv\"olgyi, Jord\'an, Kir\'aly and Vill\'anyi~\cite{Garamvolgyi} proved the existence of $f(k)$ for every $k$, with $f(k)\le320k^2$. The conjectured value $f(k)=2k$ remains open in general.

Replacing every edge of an undirected graph by a digon gives a symmetric digraph, so the undirected problem is a special case of the following conjecture.

\begin{conjecture}[Jackson and Thomassen~\cite{Thomassen1989}]\label{conj1}
For every positive integer $k$, every $2k$-strong digraph contains a spanning $k$-strong oriented subdigraph.
\end{conjecture}

For $k=1$, Conjecture~\ref{conj1} follows from the arc-connectivity results above. For general digraphs, however, even the existence of an integer $r$ such that every $r$-strong digraph contains a spanning $2$-strong oriented subdigraph remains open~\cite{Zhou1}.

The purpose of this paper is to study dense degree conditions guaranteeing a $k$-strong orientation. For a vertex $v$, write $d_D(v)=d^+_D(v)+d^-_D(v)$ for its total degree. We use $\delta^{0}(D)=\min\{\delta^{+}(D),\delta^{-}(D)\}$ for the minimum semi-degree of $D$, where $\delta^+(D)$ and $\delta^-(D)$ are the minimum outdegree and minimum indegree of $D$, respectively. 
We also write $\delta(D)=\min_{v\in V(D)}\bigl(d_D^+(v)+d_D^-(v)\bigr)$ for the minimum total degree.

Our first result shows that, for fixed $k$ and large $n$, the sharp semi-degree threshold for $k$-strongness itself already guarantees a $k$-strong orientation.

\begin{restatable}{theorem}{semidegreethm}\label{thm1}
For every positive integer $k$, there exists $n_0=n_0(k)$ such that every digraph $D$ on $n\ge n_0$ vertices with $\dmin(D)\ge\lfloor(n+k-1)/2\rfloor$ admits a $k$-strong orientation.
\end{restatable}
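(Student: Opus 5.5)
The plan is a random orientation followed by a union bound over potential separators, with a deterministic repair step for the few vertices where the random choice can go wrong. Orient each digon of $D$ independently and uniformly at random, keep every non-digon arc, and call the result $H$. Suppose $H$ is not $k$-strong. Then there is a set $S$ with $|S|\le k-1$ and a partition $V(H)\setminus S=A\cup B$ with $A,B\neq\emptyset$ and no arc of $H$ from $A$ to $B$. We may choose $A$ to be an initial strong component of $H-S$, or we may argue symmetrically with a terminal one. The goal is to bound the probability of this event, summed over all triples $(S,A,B)$, by $o(1)$, after first separating out a degenerate case in which $A$ or $B$ is very small.

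\textbf{Key counting.} Write $a=|A|\le b=|B|$. By symmetry we may consider the case of no $A\to B$ arc with $a\le n/2$; the reverse case is handled by reversing every arc. Each $x\in A$ has $d_D^+(x)\ge\lfloor(n+k-1)/2\rfloor$. In $H$, the out-neighbours of $x$ must lie in $A\cup S$, so
\[
d^+_D(x)\le (a-1)+(k-1)+(\text{number of } D\text{-arcs from } x \text{ to } B).
\]
Every $D$-arc from $A$ to $B$ must therefore lie in a digon and be oriented towards $A$. Each such event has probability $1/2$ independently, so the probability that $(S,A,B)$ is a separation is $2^{-e_D(A,B)}$. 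All $A\to B$ arcs must be digons, since otherwise the triple cannot separate $H$. We also have $e_D(A,B)\ge a\bigl(\lfloor(n+k-1)/2\rfloor-a-k+2\bigr)$.

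\textbf{Large $a$.} When $a\ge C\log n$ for a suitable constant $C=C(k)$, this gives $e_D(A,B)=\Omega(an)$ as long as $a\le n/2-\Omega(n)$. The number of choices is at most $n^{k}\binom{n}{a}\le n^{k+a}$, so the union bound gives $\sum_a n^{k+a}2^{-\Omega(an)}=o(1)$. The delicate range is $a$ close to $n/2$, and there I would count from both sides. Take a vertex $x\in A$ and a vertex $y\in B$. The $D$-out-neighbours of $x$ and the $D$-in-neighbours of $y$ each have size at least $\lfloor(n+k-1)/2\rfloor$, and both avoid nothing except that their arcs crossing the cut are digons. From $|A|+|B|+|S|=n$ one checks that a positive fraction of the pairs $(x,y)\in A\times B$ are digon arcs across the cut. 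This is exactly where the floor in the threshold is used, since it rules out the extremal near-bipartite construction. So $e_D(A,B)=\Omega(n^2)$ whenever both sides are linear, and the union bound still works.

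\textbf{Small $a$ (the main obstacle).} When $a<C\log n$, the union bound fails for $a=1$. In that case $H$ needs $d^+_H(x)\ge k$ for every $x$, and some vertex may be incident mostly with digons; then $2^{-\Omega(n)}$ per vertex is fine, because $n\cdot 2^{-\Omega(n)}=o(1)$. I will check that this happens in general: for $a$ fixed, $e_D(A,B)\ge a(n/2-O(\log n))$, so the failure probability is at most $n^{k+a}2^{-an/3}$, which is summable. The genuine danger is therefore only in bookkeeping: all cut arcs must be digons, and there are at least about $n/2-a-k$ of them per vertex of $A$. So the random orientation works with positive probability for $n\ge n_0(k)$. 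If a finer case analysis is needed near $a\approx n/2$, I would fall back to the alternative of deterministically orienting a spanning $k$-strong oriented ``skeleton'' first and then randomizing the rest. I expect the balanced-cut estimate, which shows that $e_D(A,B)$ is quadratically large exactly because of the sharp threshold, to be the crux.
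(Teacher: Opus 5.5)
\textbf{There is a genuine gap in the balanced-cut range.}

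Your treatment of unbalanced cuts is sound. Write $m=n-|S|$ and $a=\min\{|A|,|B|\}$. The bound $e_D(A,B)\ge a(\lfloor m/2\rfloor-a+1)$, which follows from $\dmin(D-S)\ge\lfloor m/2\rfloor$, makes the union bound over triples $(S,A,B)$ work unless $a$ is within a constant of $m/2$. The proof breaks in exactly the range you call the crux.

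Your claim that a positive fraction of $A\times B$ consists of crossing digons, so that $e_D(A,B)=\Omega(n^2)$ whenever both sides are linear, is false. Here is a counterexample:
\begin{itemize}
\item Take $n+k-1$ even, $|S|=k-1$ and $|A|=|B|=(n-k+1)/2$.
\item Make $D[A\cup S]$ and $D[B\cup S]$ complete symmetric.
\item Add a perfect matching of digons between $A$ and $B$.
\end{itemize}
Every vertex of $A\cup B$ then has in- and out-degree exactly $(n+k-1)/2$, so $D$ meets the hypothesis. Yet $e_D(A,B)=(n-k+1)/2=m/2$. The floor in the threshold does not push balanced cuts to quadratic size. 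It forces only about one crossing arc per vertex, and the linear bound $a(\lfloor m/2\rfloor-a+1)\approx m/2$ at $a=\lfloor m/2\rfloor$ is essentially tight. So a single balanced cut can fail with probability about $2^{-m/2}$, while there are about $2^m/\sqrt m$ balanced cuts. No union bound that uses only a worst-case per-cut lower bound on $e_D(A,B)$ can succeed in this range.

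\textbf{The missing ingredient is a count of the cuts with few crossing arcs.} The paper (Lemma~\ref{randomlemma}, applied to $H=D-S$) shows that the cuts with $e_H(X,Y)<2m$ are only polynomially many.
\begin{itemize}
\item Those with a side of size at most seven number at most $14m^7$.
\item Every other such cut is nearly balanced. Fix one, say $(A,B)$. Summing out-degrees over $A$ and in-degrees over $B$ shows that $H[A]$ and $H[B]$ each miss fewer than $10m$ arcs.
\item Now take any other low cut $(X,Y)$. It must satisfy $\min\{|X\cap A|,|A\setminus X|\}<64$. Otherwise more than $12m-M_A>2m$ arcs inside $H[A]$ would go from $X$ to $Y$. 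The same holds inside $B$.
\item Hence $X$ lies within $128$ vertices of one of $\varnothing$, $A$, $B$, $V(H)$.
\end{itemize}
Union-bounding these $O(m^{128})$ low cuts against $2^{-\lfloor m/2\rfloor}$, and the remaining fewer than $2^m$ cuts against $2^{-2m}$, proves the lemma. A final union bound over the $O(n^{k-1})$ sets $S$ finishes the proof.

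Your suggested fallback of first orienting a $k$-strong skeleton deterministically is too vague to stand in for this counting step.
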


The threshold in Theorem~\ref{thm1} cannot be lowered, even if one only asks that $D$ itself be $k$-strong.

\begin{proposition}\label{prop1}
Let $1\le k\le n-1$. Every $n$-vertex digraph $D$ with $\dmin(D)\ge\lfloor(n+k-1)/2\rfloor$ is $k$-strong, and this bound cannot be decreased by one.
\end{proposition}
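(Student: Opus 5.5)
The plan is to prove the two parts separately. Sufficiency will come from a direct counting argument on a hypothetical small separator. Sharpness will come from an explicit construction with a separator of size $k-1$.

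\emph{Sufficiency.} First, $D$ has at least $k+1$ vertices, since $n\ge k+1$ by assumption. Suppose, for a contradiction, that there is a set $S$ with $|S|\le k-1$ such that $D-S$ is not strong. Then $V(D)\setminus S$ splits into two nonempty sets $A$ and $B$ with no arc from $A$ to $B$. We can take $A$ to be a terminal strong component of $D-S$ and $B$ the rest, or the other way round; only the absence of $A\to B$ arcs matters.

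Pick any $a\in A$ and any $b\in B$. Every out-neighbour of $a$ lies in $(A\setminus\{a\})\cup S$, so
\[
d^+(a)\le |A|-1+|S|.
\]
Every in-neighbour of $b$ lies in $(B\setminus\{b\})\cup S$, so
\[
d^-(b)\le |B|-1+|S|.
\]
Adding these and using $|A|+|B|=n-|S|$ gives
\[
2\dmin(D)\le n+|S|-2\le n+k-3.
\]
On the other hand, $2\lfloor(n+k-1)/2\rfloor\ge n+k-2$. This is a contradiction, so $D$ is $k$-strong.

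\emph{Sharpness.} We need an $n$-vertex digraph with $\dmin=\lfloor(n+k-1)/2\rfloor-1$ that is not $k$-strong. Take disjoint sets $S$, $A$, $B$ with
\[
|S|=k-1,\qquad |A|=\lceil (n-k+1)/2\rceil,\qquad |B|=\lfloor (n-k+1)/2\rfloor .
\]
Both $A$ and $B$ are nonempty because $n-k+1\ge 2$. Make $A\cup S$ and $B\cup S$ complete digraphs, and add every arc from $B$ to $A$. Then there are no arcs from $A$ to $B$, so $D-S$ is not strong and $D$ is not $k$-strong.

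Now compute the semi-degrees.
\begin{itemize}
\item Vertices of $S$ have in- and out-degree $n-1$.
\item A vertex of $A$ has in-degree $n-1$ and out-degree $|A|+k-2$.
\item A vertex of $B$ has out-degree $n-1$ and in-degree $|B|+k-2$.
\end{itemize}
Hence
\[
\dmin(D)=\lfloor(n-k+1)/2\rfloor+k-2=\lfloor(n+k-3)/2\rfloor=\lfloor(n+k-1)/2\rfloor-1.
\]
This shows the bound cannot be decreased by one.

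The only delicate point is the floor arithmetic in both directions. Nothing here is a genuine obstacle; the argument is elementary.
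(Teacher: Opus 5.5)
Your proof is correct and is essentially the paper's argument. For sufficiency you use the same separator-and-cut degree count, adding the bounds $d^+(a)\le|A|+|S|-1$ and $d^-(b)\le|B|+|S|-1$ to get $2\dmin(D)\le n+k-3$, where the paper instead bounds $\dmin(D)\le\min\{|A|,|B|\}+|S|-1\le\lfloor(n+|S|)/2\rfloor-1$. For sharpness you use the same two-complete-digraphs-sharing-$S$ construction, except that you also add all arcs from $B$ to $A$; this leaves the minimum semi-degree unchanged, and it is exactly the example the paper uses for the sharpness part of Corollary~\ref{cor}, so both versions work.
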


Theorem~\ref{thm1} immediately gives a sharp total degree consequence.

\begin{corollary}\label{cor}
For every positive integer $k$, there exists $n_0=n_0(k)$ such that every digraph $D$ on $n\ge n_0$ vertices with $\delta(D)\ge\lfloor(3n+k-3)/2\rfloor$ admits a $k$-strong orientation. The bound is best possible even for the weaker conclusion that $D$ itself is $k$-strong.
\end{corollary}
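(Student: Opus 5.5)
The plan is to reduce Corollary~\ref{cor} to Theorem~\ref{thm1} by turning the total-degree bound into a semi-degree bound. The only structural fact needed is that in a digraph with at most one arc in each direction between two vertices we have $d_D^+(v)\le n-1$ and $d_D^-(v)\le n-1$. Fix $k$ and take the same $n_0(k)$ as in Theorem~\ref{thm1}, enlarged if necessary so that $n_0\ge k+1$.

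For the positive part, let $D$ satisfy $\delta(D)\ge\lfloor(3n+k-3)/2\rfloor$. For any vertex $v$,
\[
d_D^+(v)\ge \delta(D)-d_D^-(v)\ge \Bigl\lfloor\frac{3n+k-3}{2}\Bigr\rfloor-(n-1)=\Bigl\lfloor\frac{3n+k-3}{2}-(n-1)\Bigr\rfloor=\Bigl\lfloor\frac{n+k-1}{2}\Bigr\rfloor,
\]
where subtracting the integer $n-1$ commutes with the floor. The symmetric computation bounds $d_D^-(v)$ from below. Hence $\dmin(D)\ge\lfloor(n+k-1)/2\rfloor$, and Theorem~\ref{thm1} gives a $k$-strong orientation.

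For sharpness, I would start from the extremal digraph $D_0$ behind Proposition~\ref{prop1}. It has $\dmin(D_0)=\lfloor(n+k-1)/2\rfloor-1$ and is not $k$-strong, and the key requirement is that it can be chosen with
\[
\delta(D_0)=\Bigl\lfloor\frac{3n+k-3}{2}\Bigr\rfloor-1 .
\]
The natural candidate is the following. Take a separator $S$ with $|S|=k-1$ and split the rest into parts $A,B$ with $|A|+|B|=n-k+1$ and sizes as equal as possible. Make $A\cup S$ and $B\cup S$ complete digraphs, add all arcs from $A$ to $B$, and add no arcs from $B$ to $A$. Then $D_0-S$ has no path from $B$ to $A$, so $D_0$ is not $k$-strong. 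A vertex of $A$ has total degree $2(|A|+k-2)+|B|$, and a vertex of $B$ has total degree $2(|B|+k-2)+|A|$. The minimum occurs in the smaller part, giving $\delta(D_0)=n+k-3+\min(|A|,|B|)=n+k-3+\lfloor(n-k+1)/2\rfloor$. Vertices of $S$ have total degree $2(n-1)$, which is at least this value.

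The main step is the arithmetic check that $n+k-3+\lfloor(n-k+1)/2\rfloor=\lfloor(3n+k-3)/2\rfloor-1$. Writing the left side as $\lfloor(2n+2k-6+n-k+1)/2\rfloor=\lfloor(3n+k-5)/2\rfloor$, it equals the right side. So $D_0$ misses the bound by exactly one and is not $k$-strong, which proves the bound is best possible. If the paper's construction in Proposition~\ref{prop1} differs, I would use this explicit one instead; I expect no real obstacle beyond this parity bookkeeping.
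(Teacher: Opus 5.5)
Your proposal is correct and follows the paper's proof essentially verbatim. It uses the same reduction to Theorem~\ref{thm1} via $d_D^{\pm}(v)\le n-1$ and the same sharpness construction; the paper adds all arcs from $B$ to $A$ rather than from $A$ to $B$, which is only a relabelling, and your degree and floor computations agree with the paper's.
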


Our second main result is a Woodall-type analogue. Woodall~\cite{Woodall} proved that every $n$-vertex digraph satisfying $d_D^+(x)+d_D^-(y)\ge n$ for every missing arc $xy$ contains a directed Hamilton cycle. Ore--Woodall-type degree conditions for linkedness were studied by Ferrara, Jacobson and Pfender~\cite{FJP}. Define $\sigma_2(D)=\min_{\substack{x\ne y\\xy\notin A(D)}}\bigl(d_D^+(x)+d_D^-(y)\bigr)$, with the convention $\sigma_2(D)=\infty$ if $D$ is complete symmetric.

\begin{restatable}{theorem}{woodallthm}\label{thm2}
For every positive integer $k$, there exists $n_0=n_0(k)$ such that every digraph $D$ on $n\ge n_0$ vertices with $\sigma_2(D)\ge n+2k-2$ admits a $k$-strong orientation.
\end{restatable}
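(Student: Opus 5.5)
Under $\sigma_2(D)\ge n+2k-2$, a vertex may have small out- or in-degree. The plan is to use the Woodall condition to control a few low-degree vertices, and then reuse the random-orientation machinery behind Theorem~\ref{thm1}.

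\textbf{Step 1: structure from the Woodall condition.} Call $x$ \emph{out-poor} if $d^+_D(x)<(n+2k-2)/2$, and \emph{in-poor} if $d^-_D(x)<(n+2k-2)/2$. If $x$ is out-poor and $y$ is in-poor with $x\ne y$, the condition forces $xy\in A(D)$. So out-poor vertices dominate all in-poor vertices. If $x$ has $d^+_D(x)=a$, then every $y\notin N^+(x)\cup\{x\}$ has $d^-_D(y)\ge n+2k-2-a$. This gives $n-1-a$ vertices of large in-degree.

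I will show that the set $L$ of vertices with $\min(d^+,d^-)$ at most $\varepsilon n$ is small, and that for $v\in L$ every non-neighbour $y$ of $v$ (in the relevant direction) has degree close to $n$. I will also show that $D$ is $k$-strong with room to spare. For this, take a separator $S$ with $|S|\le 2k-1$ splitting $V\setminus S$ into $A,B$ with no $A\to B$ arcs. Pick $x\in A$ and $y\in B$. Then $d^+(x)\le |A|-1+|S|$ and $d^-(y)\le |B|-1+|S|$, so their sum is at most $n+|S|-2 \le n+2k-3$, a contradiction. Hence $D$ is $2k$-strong. Sharpness follows from the same computation.

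\textbf{Step 2: random orientation.} Orient each digon uniformly and independently at random, and call the result $D'$. Suppose $D'$ is not $k$-strong. Then there is a separation $(A,S,B)$ with $|S|=k-1$ and no $A\to B$ arc in $D'$. When $|A|,|B|$ are both large, Step~1 gives many arcs or digons between $A$ and $B$. Indeed, for any $x\in A$, $y\in B$ with $xy\notin A(D)$, the degree sum is large, so the number of $A\to B$ arcs of $D$ that survive with probability at least $1/2$ is $\Omega(|A||B|)$. This is where $\sigma_2\ge n+2k-2$ is used. A union bound over the at most $n^{k-1}2^n$ separations, each failing with probability $2^{-\Omega(|A||B|)}$, handles the case $\min(|A|,|B|)\ge C\log n$. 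Near-balanced cuts contribute $2^{-\Omega(n^2)}$. Cuts with $|A|=s$ small are counted as $n^{k-1+s}$ and need failure probability $2^{-\Omega(sn)}$. That holds when the vertices of $A$ have out-degree linear in $n$ towards $B$.

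\textbf{Step 3: main obstacle — low-degree vertices.} The hard case is a small side $A$, in particular $A=\{v\}$ with $d^+_D(v)$ only about $k$ or so. Randomness then gives no concentration. I will handle $L$ deterministically before randomizing. By Step~1, the out-poor vertices form a set $P$, and every in-poor vertex is dominated by all of $P$. A vertex with tiny out-degree $a$ forces all $n-1-a$ vertices outside its out-neighbourhood to have in-degree at least $n+2k-2-a$, so at most one or two extreme vertices can coexist. For each $v\in L$ I will fix orientations of its incident digons in advance, so that $v$ retains at least $k$ out-arcs and $k$ in-arcs. The chosen neighbours are high-degree vertices, which tolerate losing a bounded number of arcs. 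Conflicts between two low vertices sharing digons are resolved using the domination structure. Finally, I rerun the union bound on the remaining random digons, now conditioned on these fixed choices. Cuts whose small side contains a vertex of $L$ are then secured by the reserved arcs together with the near-completeness of that vertex's complement. I expect the careful case analysis around such a vertex $v$ to be the main work: its $k$ reserved out-neighbours must themselves reach everything, which needs combining their high degrees with the $2k$-strongness from Step~1.
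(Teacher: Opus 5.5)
\textbf{The estimate in your Step~2 is false, and the missing idea is a low-cut counting argument together with a reason why every cut that can fail has two large sides.}

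For $x\in A$, $y\in B$ with $xy\notin A(D)$, the Woodall condition only says that the number of out-neighbours of $x$ in $B$ plus the number of in-neighbours of $y$ in $A$ is at least $2k-|S|$. A Cauchy--Schwarz count then gives $e_D(A,B)\ge(2k-|S|)|A||B|/(|A|+|B|)$. This is linear in $\min\{|A|,|B|\}$, not $\Omega(|A||B|)$, and it can be attained exactly.

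Here is an example. Take $|A|=|B|=n/2$ and let $D[A]$ and $D[B]$ be complete symmetric. Add all arcs from $B$ to $A$, and from $A$ to $B$ only the arcs of a $k$-regular bipartite graph. Every missing arc $ab$ has $d_D^+(a)+d_D^-(b)=n+2k-2$, yet $e_D(A,B)=kn/2$. So this near-balanced cut fails under independent orientation with probability $2^{-kn/2}$, not $2^{-\Omega(n^2)}$.

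Similarly, let $|A|=\alpha n$ and give each vertex of $A$ exactly $2k$ out-neighbours in $B$. That cut has only $2k\alpha n$ forward digons. There are about $2^{h(\alpha)n}$ cuts with a side of that size, where $h$ is the binary entropy function. Since $h(\alpha)/\alpha\to\infty$ as $\alpha\to0$, no union bound that uses only the guaranteed number of forward digons per cut can work.

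What you need is a \emph{counting} argument for low cuts. Fix one cut with few forward arcs. Use the Woodall sum to show its two sides miss only $O_k(n)$ arcs internally. Conclude that every other such cut differs in $O_k(1)$ vertices from $\varnothing$, $A$, $B$ or $A\cup B$. That counting only works if every cut that can fail has both sides of linear size, and independent orientation of the digons gives no such guarantee.

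Step~3 is where this would have to be repaired, but it is only a plan, and some of its premises fail. In the second example with $\alpha<\varepsilon$, all of $A$ lies in $L$. So $L$ can have linear size, and a reserved neighbour can lose linearly many arcs, not a bounded number. The interactions among low vertices, and the union bound conditioned on the fixed choices, are left open.

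The paper avoids all of this with a different random orientation.
\begin{itemize}
\item Decompose the digon graph into edge-disjoint paths and cycles so that every vertex is an endpoint of at most one path; this follows from an Euler-tour argument.
\item Orient each path or cycle as a whole, in a uniformly random direction, independently.
\item Then $|d^+_{\mathrm{dig}}(v)-d^-_{\mathrm{dig}}(v)|\le1$ at every vertex. Summing over one side shows that any cut with no forward arc has $e_D(A,B)\le(|S|+1)\min\{|A|,|B|\}$.
\item Combined with the Woodall lower bound above, this forces $\min\{|A|,|B|\}\ge(n-|S|)/(k+1)$ for every cut that can fail at all.
\end{itemize}

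Consequently, low-degree vertices need no separate treatment; for example, a vertex with exactly $2k$ digons automatically gets $k$ in and $k$ out. The counting lemma then bounds the relevant cuts by $n^{O_k(1)}$. A compatibly oriented path or cycle contains at most $|S|+1$ edges between $A$ and $B$, so each relevant cut fails with probability at most $2^{-(n-k+1)/(k+1)}$.
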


The degree-sum threshold is also best possible.

\begin{proposition}\label{prop2}
Let $k\ge1$ and $n\ge2k+1$. There exists an $n$-vertex digraph $D$ with $\sigma_2(D)=n+2k-3$ that admits no $k$-strong orientation. Hence the bound in Theorem~\ref{thm2} cannot be decreased by one.
\end{proposition}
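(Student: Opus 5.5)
The plan is to exhibit a single vertex whose underlying degree is too small. Any $k$-strong digraph has minimum semi-degree at least $k$: deleting the at most $k-1$ other out-neighbours of a vertex must leave it with an out-neighbour, and similarly for in-neighbours. In an orientation, each neighbour of $v$ contributes exactly one arc at $v$. So a vertex $v$ whose number of distinct neighbours is at most $2k-1$ cannot have both $d^+(v)\ge k$ and $d^-(v)\ge k$ in any orientation. I will build a dense digraph around such a vertex and check that $\sigma_2$ equals $n+2k-3$.

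\textbf{Construction.} Let $k\ge1$ and $n\ge 2k+1$. Fix a vertex $v$ and a set $W$ of $n-1$ further vertices. Choose $S\subseteq W$ with $|S|=2k-1$; this is possible since $n-1\ge 2k$. Let $D$ contain every arc in both directions between any two vertices of $W$, so $D[W]$ is complete symmetric. Also let $D$ contain the digons between $v$ and each vertex of $S$, and nothing else. Then
\[
d^+_D(v)=d^-_D(v)=2k-1,
\]
while every $y\in W\setminus S$ has $d^\pm_D(y)=n-2$, and every $s\in S$ has $d^\pm_D(s)=n-1$. Since $n\ge 2k+1$, the set $W\setminus S$ is nonempty, so $D$ is not complete symmetric.

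\textbf{Computing $\sigma_2$.} The only missing arcs are $vy$ and $yv$ with $y\in W\setminus S$. For these,
\[
d^+_D(v)+d^-_D(y)=(2k-1)+(n-2)=n+2k-3,
\]
and symmetrically $d^+_D(y)+d^-_D(v)=n+2k-3$. Hence $\sigma_2(D)=n+2k-3$.

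\textbf{No $k$-strong orientation.} In any orientation $D'$ of $D$, the vertex $v$ is incident with exactly $2k-1$ arcs, one for each neighbour in $S$. Therefore $\min\{d^+_{D'}(v),d^-_{D'}(v)\}\le k-1$. Suppose $d^+_{D'}(v)\le k-1$. Deleting the out-neighbours of $v$ removes at most $k-1$ vertices. This leaves $v$ together with at least $n-k\ge 1$ other vertices, and $v$ now has no out-arc, so the resulting digraph is not strong. The case $d^-_{D'}(v)\le k-1$ is symmetric. Thus $D'$ is not $k$-strong, and this shows that the bound in Theorem~\ref{thm2} cannot be decreased by one.

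There is no real obstacle here. The only care needed is to check that $W\setminus S\neq\emptyset$, which is exactly where $n\ge 2k+1$ is used, and that after deleting the out-neighbours of $v$ at least one vertex other than $v$ remains.
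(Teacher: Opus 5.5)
Your proposal is correct and is essentially the paper's proof: the same construction (a complete symmetric $K_{n-1}$ plus a vertex $v$ joined by digons to exactly $2k-1$ of its vertices), the same computation $\sigma_2(D)=(2k-1)+(n-2)=n+2k-3$, and the same observation that in every orientation $v$ has indegree or outdegree at most $k-1$. The only difference is that you prove directly why a $k$-strong digraph has minimum semi-degree at least $k$, which the paper simply states as a known fact.
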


For $k=1$, Theorem~\ref{thm2} is exactly Woodall's Hamiltonicity threshold. For general $k$, its hypothesis already implies that $D$ is $2k$-strong, see Lemma~\ref{2k-strong}. Thus Theorem~\ref{thm2} confirms Conjecture~\ref{conj1} for digraphs satisfying a Woodall-type degree condition.

The two results also have immediate consequences for orientations of undirected graphs. Replacing every edge of a graph $G$ by a digon we obtain the complete biorientation $\stackrel{\leftrightarrow}{G}
$ of $G$. Theorem~\ref{thm1} implies that, for every fixed positive integer $k$ and all sufficiently large $n$, every $n$-vertex graph $G$ with $\delta(G)\ge\lfloor(n+k-1)/2\rfloor$ admits a $k$-connected orientation. Likewise, Theorem~\ref{thm2} implies that $d_G(x)+d_G(y)\ge n+2k-2$ for every pair of nonadjacent vertices $x,y$ guarantees a $k$-connected orientation. Both bounds are best possible.

Several partial results toward Conjecture~\ref{conj1} are known for classes related to tournaments. A digraph is \textbf{semicomplete} if every pair of distinct vertices is adjacent, and a \textbf{tournament} is an
oriented semicomplete digraph. Bang-Jensen and Jord\'an~\cite{Joergen3} proved that every $3$-strong semicomplete digraph on at least five vertices contains a spanning $2$-strong tournament. Wang, Qi and Yan~\cite{Wang} proved that every $5$-strong semicomplete digraph on at least nine vertices contains a spanning
$3$-strong tournament. More generally, Bang-Jensen and Jord\'an~\cite{Joergen3} conjectured that every $(2k-1)$-strong semicomplete digraph on at least $2k+1$ vertices contains a spanning $k$-strong tournament. This would be best possible as shown by an infinite family of $(2k-2)$-strong semicomplete digraphs with no such orientation \cite{Joergen3}.

Related linear connectivity bounds are known for several other structured classes of digraphs. In particular, for $k\ge2$, $(3k-2)$-strong connectivity suffices for locally semicomplete digraphs~\cite{Guo} and for
quasi-transitive digraphs~\cite{Joergen2}, see also~\cite{Bang1} for earlier results on locally semicomplete digraphs. More recently, Zhou, Bang-Jensen, Zhou and Yan~\cite{Zhou1} obtained bounds of $4k+1$ for extended semicomplete digraphs and $5k$ for semicomplete split digraphs. For related work on random strong orientations of graphs, see Aksoy and Horn~\cite{Aksoy}.

The proof of Theorem~\ref{thm1} also yields a quantitative extension in which $k$ grows linearly with $n$: the same conclusion holds whenever $k\le\alpha n$ for any fixed $0<\alpha<0.094882\ldots$. This is stated precisely in Proposition~\ref{semiprop}.

\medskip
\noindent\textbf{Proof strategy and organization.}
Section~\ref{sec2} collects the preliminaries and the sharpness constructions. In Section~\ref{sec3}, we prove Theorem~\ref{thm1} by orienting the digons independently and uniformly at random. The main point is that every nontrivial cut contains linearly many forward arcs, while only polynomially many cuts can have close to the minimum possible number of such arcs. The same estimate, applied after deleting fewer than $k$ vertices, yields both Theorem~\ref{thm1} and Proposition~\ref{semiprop}.

The proof of Theorem~\ref{thm2} in Section~\ref{sec4} uses a different random orientation. We decompose the digon graph into edge-disjoint paths and cycles and orient each path or cycle as a whole. The resulting balance at each vertex excludes highly unbalanced bad cuts, and a second low-cut counting argument handles the remaining cuts. Section~\ref{sec5} concludes with some remarks.

\section{Preliminaries}\label{sec2}

For a vertex $v$ of a digraph $D$, let $d_D^+(v)$ and $d_D^-(v)$ denote its outdegree and indegree. An arc is \textbf{single} if it is not in a digon. For disjoint sets $X,Y\subseteq V(D)$, let $e_D(X,Y)$ be the number of arcs from $X$ to $Y$, and let $e_D(X)$ be the number of arcs with both ends in $X$. The subdigraph induced by $X$ is denoted by $D[X]$, and $D-S=D[V(D)\setminus S]$. The \textbf{complete symmetric digraph} on a vertex set contains both arcs between every pair of distinct vertices. We write $X\symdiff Y$ for the symmetric difference of two sets.

An \textbf{ordered cut} of $D$ is a pair $(X,Y)$ of nonempty disjoint sets whose union is $V(D)$. The standard cut characterization states that $D$ is strong if and only if $e_D(X,Y)>0$ for every ordered cut $(X,Y)$. Consequently, provided $|V(D)|\ge k+1$, the digraph $D$ is $k$-strong if and only if this condition holds in $D-S$ for every $S\subseteq V(D)$ with $|S|\le k-1$. 

We next prove Proposition~\ref{prop1} and Proposition~\ref{prop2}.

\begin{proof}[Proof of Proposition~\ref{prop1}]
Suppose that $D$ satisfies $\dmin(D)\ge\lfloor(n+k-1)/2\rfloor$ but $D$ is not $k$-strong. Then there is a set $S\subseteq V(D)$ with $s=|S|\le k-1$ and an ordered cut $(A,B)$ of $D-S$ such that $e_D(A,B)=0$. Every $a\in A$ has outdegree at most $|A|+s-1$, and every $b\in B$ has indegree at most $|B|+s-1$. It follows that
\begin{align*}
\dmin(D)
&\le \min\{|A|,|B|\}+s-1\\
&\le \left\lfloor\frac{n-s}{2}\right\rfloor+s-1
=\left\lfloor\frac{n+s}{2}\right\rfloor-1
\le\left\lfloor\frac{n+k-1}{2}\right\rfloor-1,
\end{align*}
a contradiction.

To prove that the bound is sharp, partition an $n$-vertex set into $A,B,S$, where $|S|=k-1$, $|A|+|B|=n-k+1$ and $\bigl||A|-|B|\bigr|\le1$. Since $k\le n-1$, both $A$ and $B$ are nonempty. Let $D[A\cup S]$ and $D[B\cup S]$ be complete symmetric digraphs, with no arcs between $A$ and $B$. Then $\dmin(D)=\min\{|A|,|B|\}+k-2=\lfloor(n+k-1)/2\rfloor-1$, but $D-S$ is not strong, so $D$ is not $k$-strong.
\end{proof}

\begin{proof}[Proof of Proposition~\ref{prop2}]
Let $G$ be obtained from $K_{n-1}$ by adding one vertex $v$ adjacent to exactly $2k-1$ vertices of the clique, and let $D$ be the symmetric digraph obtained by replacing every edge of $G$ by a digon. Since $n\ge2k+1$, there is a vertex $y$ of the clique not adjacent to $v$. For every such $y$,
\[
d_D^+(v)+d_D^-(y)=(2k-1)+(n-2)=n+2k-3,
\]
and the same value is obtained for the missing arc $yv$. These are the only missing arcs of $D$, so $\sigma_2(D)=n+2k-3$. In every orientation of $D$, the vertex $v$ has total degree $2k-1$. Hence either its outdegree or its indegree is at most $k-1$. Since every $k$-strong digraph has minimum indegree and minimum outdegree at least $k$, no orientation of $D$ is $k$-strong. Thus the bound in Theorem~\ref{thm2} cannot be decreased by one.
\end{proof}

The following observation explains why Theorem~\ref{thm2} falls within the connectivity range of Conjecture~\ref{conj1}.

\begin{lemma}\label{2k-strong}
Let $D$ be a digraph on $n\ge2k+1$ vertices. If $\sigma_2(D)\ge n+2k-2$, then $D$ is $2k$-strong.
\end{lemma}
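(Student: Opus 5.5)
We argue by contradiction, mirroring the proof of Proposition~\ref{prop1}. Suppose $D$ satisfies $\sigma_2(D)\ge n+2k-2$ but is not $2k$-strong. Since $n\ge 2k+1$, the vertex count requirement holds, so there is a set $S\subseteq V(D)$ with $s=|S|\le 2k-1$ and an ordered cut $(A,B)$ of $D-S$ with $e_D(A,B)=0$. Thus $A,B$ are nonempty and $|A|+|B|=n-s$.

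Pick any $a\in A$ and $b\in B$. Since $e_D(A,B)=0$, the arc $ab$ is missing, so the hypothesis applies to the pair $(a,b)$:
\[
d_D^+(a)+d_D^-(b)\ge n+2k-2 .
\]
On the other hand, every out-neighbour of $a$ lies in $(A\setminus\{a\})\cup S$, so $d_D^+(a)\le |A|-1+s$. Likewise every in-neighbour of $b$ lies in $(B\setminus\{b\})\cup S$, so $d_D^-(b)\le |B|-1+s$. Adding these,
\[
d_D^+(a)+d_D^-(b)\le |A|+|B|+2s-2=n+s-2\le n+2k-3,
\]
which contradicts the displayed lower bound. Hence $D$ is $2k$-strong.

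There is no real obstacle here. The only points to check are that $ab$ is a genuinely missing arc between distinct vertices, which is why the pair counts in $\sigma_2(D)$, and that the hypothesis $n\ge 2k+1$ supplies the ``at least $2k+1$ vertices'' clause in the definition of $2k$-strong. If $D$ is complete symmetric, then $\sigma_2(D)=\infty$ and $D$ is trivially $2k$-strong, so no separate case analysis is needed.
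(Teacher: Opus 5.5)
Your proof is correct and follows essentially the same argument as the paper: take a separating set $S$ with $|S|\le 2k-1$ and an ordered cut $(A,B)$ of $D-S$ with no arc from $A$ to $B$, then bound $d_D^+(a)+d_D^-(b)\le n+|S|-2\le n+2k-3$ for the missing arc $ab$. Your extra remarks, that $n\ge 2k+1$ takes care of the vertex-count clause and that the complete symmetric case needs no separate treatment, are accurate but not needed.
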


\begin{proof} Suppose otherwise. There is a set $S\subseteq V(D)$ with $s=|S|\le2k-1$ and an ordered cut $(A,B)$ of $D-S$ such that $e_D(A,B)=0$. For any $a\in A$ and $b\in B$, the arc $ab$ is missing, while $d_D^+(a)\le |A|+s-1$ and $d_D^-(b)\le |B|+s-1$. Hence $\sigma_2(D)\le d_D^+(a)+d_D^-(b)\le n+s-2\le n+2k-3$,
a contradiction.
\end{proof}

\section{Proofs of Theorem~\ref{thm1} and Corollary~\ref{cor}}\label{sec3}

\subsection{A random orientation lemma}\label{subsec1}

The \textbf{random orientation} of a digraph retains every single arc and, independently for each digon, retains one of its two arcs with equal probability $1/2$.

\begin{lemma}\label{randomlemma}
Let $m\ge64$, and let $H$ be an $m$-vertex digraph with $\dmin(H)\ge\lfloor m/2\rfloor$. Its random orientation $R$ satisfies
\[
\Prob(R\text{ is not strong})
\le 2^{-m}+530m^{128}2^{-\lfloor m/2\rfloor}.
\]
In particular, the failure probability tends to zero exponentially as $m\to\infty$.
\end{lemma}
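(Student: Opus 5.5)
We plan to use the cut characterization with a union bound over ordered cuts. Fix an ordered cut $(X,Y)$ of $H$ with $|X|=x$, $|Y|=m-x$. The orientation $R$ fails to have an arc from $X$ to $Y$ exactly when $H$ has no single arc from $X$ to $Y$ and every digon between $X$ and $Y$ is oriented from $Y$ to $X$. So if $t(X,Y)$ is the number of digons between $X$ and $Y$ and $H$ has no single arc from $X$ to $Y$, then $\Prob(e_R(X,Y)=0)=2^{-t(X,Y)}$; otherwise the probability is $0$. In this situation $e_H(X,Y)=t(X,Y)$, and the whole proof reduces to lower-bounding $e_H(X,Y)$ and counting the cuts with small $e_H(X,Y)$.

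\emph{Step 1 (lower bound on forward arcs).} Each $u\in X$ has $d^+_H(u)\ge\lfloor m/2\rfloor$ and at most $x-1$ out-neighbours inside $X$. Hence $e_H(X,Y)\ge x(\lfloor m/2\rfloor-x+1)$. Symmetrically, using in-degrees in $Y$, $e_H(X,Y)\ge (m-x)(\lfloor m/2\rfloor-m+x+1)$. For $x=1$ or $x=m-1$ this gives at least $\lfloor m/2\rfloor$. These $2m$ trivial cuts contribute at most $2m\cdot 2^{-\lfloor m/2\rfloor}$. In general, taking the better of the two bounds shows $e_H(X,Y)$ is at least of order $\min(x,m-x)\cdot(\lfloor m/2\rfloor-\min(x,m-x))$ when one side is small. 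For balanced cuts we need a different argument: summing over both sides gives $e_H(X,Y)+e_H(Y,X)\ge$ something. Since only $e_H(X,Y)$ matters, we instead bound $e_H(X,Y)\ge x\lfloor m/2\rfloor-x(x-1)$, and we need a bound that is still linear near $x\approx m/2$, where this bound degenerates to about $x$.

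\emph{Step 2 (counting low cuts).} The key idea is the low-cut counting the paper advertises: cuts with $e_H(X,Y)$ close to the minimum are few. Let $\lambda=\lfloor m/2\rfloor$. The plan is to show that if $e_H(X,Y)\le \lambda+j$, then $(X,Y)$ is determined up to symmetric difference by $O(j)$-sized data, giving at most $m^{O(j)}$ such cuts. One way is to take two such cuts $(X,Y)$ and $(X',Y')$ and use submodularity of $e_H(\cdot,\cdot)$, that is,
\[
e(X\cap X')+e(X\cup X')\le e(X)+e(X'),
\]
to show that $X\symdiff X'$ is small or else a cut with too few arcs is produced. A Karger-type argument would instead give the bound $\#\{\text{cuts with } e\le \alpha\lambda\}\le m^{2\alpha}$ via random contraction. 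I would try the Karger-type count first, because the constants $128$ and $530$ suggest a polynomial count $m^{O(1)}$ over cuts up to some constant multiple of $\lambda$. A Karger bound is not directly valid here since the minimum cut may not equal $\lambda$, so we apply the contraction argument with the lower bound $\lambda$ on every min-degree cut: each contraction step keeps the degree bound $\ge\lambda$ over the current vertices. This yields that the number of cuts with $e_H(X,Y)\le \beta\lambda$ is at most about $m^{2\beta}$.

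\emph{Step 3 (summing).} Sum $2^{-e_H(X,Y)}$ over all cuts in three groups. The first group consists of cuts with $e_H(X,Y)\ge m$; there are at most $2^m$ cuts, but this only gives $1$. So instead we use Step 1: any cut with $\min(x,m-x)$ moderately large has $e_H\ge$ a large linear quantity, and the number of cuts with small side $s$ is $\binom{m}{s}$. The second group consists of cuts with $e_H\le 64\lambda$ (say), handled by the $m^{128}$ count times $2^{-\lambda}$. The third group consists of the remaining cuts, which satisfy $e_H>64\lambda\ge 2m$ for $m\ge 64$ and contribute at most $2^m\cdot 2^{-2m}=2^{-m}$. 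This matches the shape $2^{-m}+530m^{128}2^{-\lfloor m/2\rfloor}$. The main obstacle is Step 2, namely establishing a rigorous polynomial count of cuts with $e_H(X,Y)$ at most a constant multiple of $\lambda$ in a directed setting, since directed cut-counting lacks the symmetry Karger uses. I would handle it by applying the contraction argument to the underlying undirected multigraph of forward arcs restricted to the relevant cut, or by a direct counting argument: a cut with few forward arcs forces each vertex of $X$ to send at least roughly half its out-arcs inside $X$, which pins down $X$ from a small seed set.
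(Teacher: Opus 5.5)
There is a genuine gap in Step~2, which is the heart of the lemma. You never establish a polynomial bound on the number of low cuts, and the Karger-type contraction argument you propose does not work for directed cuts. An ordered cut $(X,Y)$ is destroyed by contracting \emph{any} arc between $X$ and $Y$, including the backward arcs from $Y$ to $X$, and $e_H(Y,X)$ is not controlled by $e_H(X,Y)$. Knowing that every super-vertex has out-degree at least $\lambda$ bounds the total number of arcs from below, but says nothing about how many of them cross $(X,Y)$ backwards. Passing to the underlying undirected multigraph fails for the same reason.

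In fact the count is false if one only uses that every cut has at least $\lambda$ forward arcs. Take complete symmetric digraphs $S,T$ on $r+1$ vertices each, together with new vertices $v_1,\dots,v_r$. Each $v_i$ receives one arc from $S$, sends one arc to $T$, sends arcs to all of $S$, and receives arcs from all of $T$. Every ordered cut of this $(3r+2)$-vertex digraph has at least $r$ forward arcs, yet each of the $2^r$ cuts $(S\cup Z,\,T\cup(\{v_1,\dots,v_r\}\setminus Z))$ has exactly $r$. Your contraction argument uses nothing beyond this property, so it would give a polynomial bound here, which is impossible. Any valid count must therefore use the density $\dmin(H)\ge\lfloor m/2\rfloor$ essentially. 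Neither the submodularity remark nor the ``seed set'' idea is developed far enough to do so, and choosing the threshold $64\lambda$ to reproduce the exponent $128$ is reverse-engineering, not an argument.

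The missing idea is structural, and it starts from your own Step~1. You do not need a separate lower bound for balanced cuts. With $a=\min\{|X|,|Y|\}$ and the appropriate side, $e_H(X,Y)\ge a(\lfloor m/2\rfloor-a+1)\ge\lfloor m/2\rfloor$, since the difference is $(a-1)(\lfloor m/2\rfloor-a)\ge0$. The same inequality shows that a low cut (one with $e_H(X,Y)<2m$) either has a side with fewer than $8$ vertices or has both sides within $8$ of $m/2$.

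Now fix one nearly balanced low cut $(A,B)$. Summing out-degrees over $A$ shows that $H[A]$ misses fewer than $10m$ arcs, and summing in-degrees over $B$ gives the same for $H[B]$. For any other nearly balanced low cut $(X,Y)$, all but fewer than $10m$ of the $|A\cap X|\cdot|A\cap Y|$ potential arcs from $A\cap X$ to $A\cap Y$ are forward arcs of $(X,Y)$. Hence $|A\cap X|\cdot|A\cap Y|<12m$, which forces $\min\{|A\cap X|,|A\cap Y|\}<64$, and likewise inside $B$. Therefore $|X\symdiff Z|<128$ for some $Z\in\{\varnothing,A,B,V(H)\}$.

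This gives at most $14m^7+4\sum_{i\le128}\binom mi\le530m^{128}$ low cuts, each failing with probability at most $2^{-\lfloor m/2\rfloor}$. The cuts with $e_H(X,Y)\ge2m$ contribute at most $2^{-m}$ by the trivial union bound.
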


\begin{proof}
For an ordered cut $(X,Y)$ of $H$, write $a=\min\{|X|,|Y|\}$. If there is a single arc from $X$ to $Y$, it is retained in $R$, so $e_R(X,Y)=0$ is impossible. Otherwise, the $e_H(X,Y)$ arcs from $X$ to $Y$ belong to distinct digons, all of which must be directed towards $X$ for the cut to have no forward arc in $R$. Hence $\Prob(e_R(X,Y)=0)\le2^{-e_H(X,Y)}$ in all cases.

We first bound $e_H(X,Y)$. If $|X|=a$, every vertex of $X$ has at least $\lfloor m/2\rfloor-a+1$ out-neighbours in $Y$. If $|Y|=a$, every vertex of $Y$ has at least $\lfloor m/2\rfloor-a+1$ in-neighbours in $X$. Thus, in either case,
\begin{equation}\label{eq_q}
e_H(X,Y)\ge a(\lfloor m/2\rfloor-a+1)\ge \lfloor m/2\rfloor,
\end{equation}
where the last inequality follows from $1\le a\le \lfloor m/2\rfloor$ and $a(\lfloor m/2\rfloor-a+1)-\lfloor m/2\rfloor=(a-1)(\lfloor m/2\rfloor-a)\ge0$.
By the cut characterization of strong connectivity, it remains to bound the probability that at least one ordered cut has no forward arc. We divide the cuts into two ranges.

\medskip
\noindent\textbf{Cuts with $e_H(X,Y)\ge2m$.}
There are fewer than $2^m$ ordered cuts in this range, each with failure probability at most $2^{-2m}$. The union bound therefore gives a total contribution of at most $2^m2^{-2m}=2^{-m}$.

\medskip
\noindent\textbf{Cuts with $e_H(X,Y)<2m$.}
Call these cuts \textbf{low cuts}. We shall show that there are at most $530m^{128}$ of them.

First, every low cut has a side of size at most seven, or both sides have sizes within eight of $m/2$. Indeed, if $a\le m/4$, then $\lfloor m/2\rfloor-a+1\ge m/4$, so~\eqref{eq_q} gives $2m>am/4$ and hence $a<8$. If $a>m/4$, then
\[
2m>e_H(X,Y)\ge a(\lfloor m/2\rfloor-a+1)
>\frac m4\left(\frac m2-a+\frac12\right),
\]
which implies $a>m/2-8$. 

There are at most $2\sum_{i=1}^7\binom mi\le14m^7$ ordered cuts with a side of size at most seven. It remains to count the low cuts whose two sides have sizes strictly between $m/2-8$ and $m/2+8$. Call them \textbf{nearly balanced}. If there are no such cuts, the desired count follows. Otherwise, fix one, denoted by $(A,B)$, and compare the remaining nearly balanced low cuts with it.

Let $M_A=|A|\cdot(|A|-1)-e_H(A)$ be the number of arcs missing from $H[A]$ relative to the complete symmetric digraph on $A$. Summing outdegrees over $A$ gives $e_H(A)+e_H(A,B)\ge|A|\cdot\lfloor m/2\rfloor$, and therefore
\[
M_A\le |A|\cdot(|A|-1-\lfloor m/2\rfloor)+e_H(A,B)
<8|A|+2m<6m+64\le10m.
\]
We used $|A|<m/2+8$, $|A|-1-\lfloor m/2\rfloor<8$ and $m\ge64$. The same argument, summing indegrees over $B$, shows that the number $M_B$ of missing arcs in $H[B]$ also satisfies $M_B<10m$.

Consider any nearly balanced low cut $(X,Y)$, and set $x=|A\cap X|$. In the complete symmetric digraph on $A$, there are $x(|A|-x)$ arcs from $A\cap X$ to $A\cap Y$. At most $M_A$ are missing in $H$, and all the others contribute to $e_H(X,Y)$. Thus $x(|A|-x)\le e_H(X,Y)+M_A<12m$. If $t=\min\{x,|A|-x\}\ge64$, then
\[
x(|A|-x)=t(|A|-t)
\ge64\frac{|A|}{2}
>64\left(\frac m4-4\right)
=16m-256\ge12m,
\]
a contradiction. The same argument applies inside $B$. Thus, on each of $A$ and $B$, the set $X$ contains fewer than $64$ vertices or omits fewer than $64$ vertices. It follows that $|X\symdiff Z|<128$ for some $Z\in\{\varnothing,A,B,V(H)\}$.

For each of these four choices of $Z$, there are at most $\sum_{i=0}^{128}\binom mi$ possibilities for $X$. Here the sum overcounts the possible symmetric differences, and $\binom mi=0$ when $i>m$. Including the cuts with a small side, the total number of low cuts is at most
\[
14m^7+4\sum_{i=0}^{128}\binom mi
\le14m^7+516m^{128}\le530m^{128}.
\]
Since each low cut has $e_H(X,Y)\ge \lfloor m/2\rfloor$ by~\eqref{eq_q}, their total contribution to the failure probability is at most $530m^{128}2^{-\lfloor m/2\rfloor}$. Adding the contribution $2^{-m}$ from the cuts with $e_H(X,Y)\ge2m$ proves the lemma.
\end{proof}

\subsection{Proof of Theorem~\ref{thm1} and a quantitative extension}\label{subsec2}

\semidegreethm*

\begin{proof}
%[Proof of Theorem~\ref{thm1}]
Let $T$ be the random orientation of $D$. We first obtain a bound valid whenever $n-k+1\ge64$.

For $S\subseteq V(D)$ with $s=|S|\le k-1$, set $m=n-s$. Then
\begin{align*}
\dmin(D-S)
&\ge\left\lfloor\frac{n+k-1}{2}\right\rfloor-s
=\left\lfloor\frac{n+k-1-2s}{2}\right\rfloor\\
&\ge\left\lfloor\frac{n-s}{2}\right\rfloor
=\left\lfloor\frac m2\right\rfloor,
\end{align*}
where the second inequality uses $s\le k-1$. The restriction $T-S$ has exactly the distribution of the random orientation of $D-S$. Lemma~\ref{randomlemma} therefore gives $\Prob(T-S\text{ is not strong})\le C n^{128}2^{-(n-s)/2}$ for an absolute constant $C$. Taking the union bound over all such sets $S$ yields
\begin{equation}\label{eq_semifailure}
\Prob(T\text{ is not $k$-strong})
\le C n^{128}\sum_{s=0}^{k-1}\binom ns\,2^{-(n-s)/2}.
\end{equation}
No independence between the events associated with different sets $S$ is needed.

For fixed $k$, the right-hand side of~\eqref{eq_semifailure} is at most $Ck n^{k+127}2^{-(n-k+1)/2}=o(1)$. For sufficiently large $n$, it is less than one, so there is a $k$-strong orientation of $D$.
\end{proof}

\begin{proof}[Proof of Corollary~\ref{cor}]
For every $v\in V(D)$, we have $d_D^+(v)\ge \delta(D)-d_D^-(v)
\ge \left\lfloor(3n+k-3)/2\right\rfloor-(n-1)
=\left\lfloor(n+k-1)/2\right\rfloor$.
Similarly, $d_D^-(v)\ge\lfloor(n+k-1)/2\rfloor$. Hence
$\dmin(D)\ge\lfloor(n+k-1)/2\rfloor$, and Theorem~\ref{thm1} applies.

For sharpness, partition the vertex set into $A,B,S$, where $|S|=k-1$, $|A|+|B|=n-k+1$ and $\bigl||A|-|B|\bigr|\le1$. Let $D[A\cup S]$ and $D[B\cup S]$ be complete symmetric digraphs, include every arc from $B$ to $A$, and include no arc from $A$ to $B$. Then $D-S$ is not strong. For $x\in A$ and $y\in B$, we have $d_D(x)=2(|A|-1)+2|S|+|B|=n+|A|+k-3$ and $d_D(y)=2(|B|-1)+2|S|+|A|=n+|B|+k-3$, while every vertex of $S$ has total degree $2(n-1)$. Hence
\[
\delta(D)=n+\min\{|A|,|B|\}+k-3
=n+k-3+\left\lfloor\frac{n-k+1}{2}\right\rfloor
=\left\lfloor\frac{3n+k-3}{2}\right\rfloor-1.
\]
Thus the bound cannot be decreased.
\end{proof}

The estimate~\eqref{eq_semifailure} also gives a range in which $k$ may grow with $n$.

\begin{proposition}\label{semiprop}
Let $h(x)=-x\log_2x-(1-x)\log_2(1-x)$, $0<x<1$, be the binary entropy function, and let $\alpha_0\in(0,1/2)$ be the unique solution of $h(\alpha_0)=(1-\alpha_0)/2$. Then $\alpha_0=0.094882\ldots$. For every fixed $0<\alpha<\alpha_0$, there exists $n_0=n_0(\alpha)$ such that, whenever $n\ge n_0$ and $1\le k\le\alpha n$, every $n$-vertex digraph $D$ with $\dmin(D)\ge\lfloor(n+k-1)/2\rfloor$ admits a $k$-strong orientation.
\end{proposition}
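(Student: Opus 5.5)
We reuse the failure estimate~\eqref{eq_semifailure} from the proof of Theorem~\ref{thm1}. That estimate was derived under the sole assumption $n-k+1\ge64$, which holds once $k\le\alpha n$ with $\alpha<1/2$ and $n$ is large. So it suffices to show that its right-hand side
\[
C n^{128}\sum_{s=0}^{k-1}\binom ns 2^{-(n-s)/2}
\]
tends to zero uniformly over $1\le k\le\alpha n$. Every term is nonnegative, so we may bound the sum by the one with $k-1$ replaced by $\lfloor\alpha n\rfloor$; this removes the dependence on $k$.

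First, the numerical claim $\alpha_0=0.094882\ldots$. Set $g(x)=h(x)-(1-x)/2$ on $(0,1/2]$. Then $g(0^+)=-1/2<0$ and $g(1/2)=1-1/4>0$. Moreover $g'(x)=\log_2((1-x)/x)+1/2>0$ on this interval, so $g$ is strictly increasing and the root is unique. Evaluating $g$ at $0.094882$ and $0.094883$ confirms the digits, which is a routine numerical check. We also record that $g<0$ on $(0,\alpha_0)$.

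Next, we bound each term. For $s\le\alpha n\le n/2$ we use the standard estimate $\binom ns\le 2^{nh(s/n)}$. Writing $x=s/n$, the $s$-th term is at most $2^{n(h(x)-(1-x)/2)}=2^{ng(x)}$. Since $g$ is increasing on $(0,1/2]$ and $x\le\alpha<\alpha_0$, we get $g(x)\le g(\alpha)=:-\varepsilon<0$, where $\varepsilon>0$ depends only on $\alpha$. The term $s=0$ is simply $2^{-n/2}$. Summing at most $n$ terms gives
\[
\Prob(T\text{ is not $k$-strong})\le C n^{129}2^{-\min\{\varepsilon,1/2\}n},
\]
which is less than one for $n\ge n_0(\alpha)$. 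Hence a $k$-strong orientation exists.

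The only point needing care is that the bound is uniform in $k$. This follows because we used monotonicity of $g$ and took the worst case $s\le\alpha n$, independently of $k$. The hypothesis $m=n-s\ge64$ of Lemma~\ref{randomlemma} also holds uniformly, since $n-s\ge(1-\alpha)n$. The main (mild) obstacle is therefore just the careful use of the entropy bound together with the monotonicity of $g$. Everything else is inherited verbatim from the proof of Theorem~\ref{thm1}.
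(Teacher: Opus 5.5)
Your proof is correct and follows essentially the same route as the paper: you feed \eqref{eq_semifailure} into the entropy bound and obtain the exponent $h(\alpha)-(1-\alpha)/2<0$ uniformly over $1\le k\le\alpha n$. The only difference is cosmetic. You apply the pointwise bound $\binom ns\le 2^{nh(s/n)}$ term by term and use the monotonicity of $g$, which costs an extra factor of $n$, whereas the paper uses the cumulative bound $\sum_{s\le\alpha n}\binom ns\le 2^{h(\alpha)n}$ together with $n-s\ge(1-\alpha)n$.
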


\begin{proof}
The function $h(x)-(1-x)/2$ is strictly increasing on $(0,1/2)$, with limits $-1/2$ at zero and $3/4$ at $1/2$, so $\alpha_0$ is well defined.

Fix $0<\alpha<\alpha_0$ and let $1\le k\le\alpha n$. For all sufficiently large $n$, we have $n-k+1\ge64$, so~\eqref{eq_semifailure} applies. Since $0<\alpha<1/2$, the binomial entropy bound gives
\[
\sum_{s=0}^{k-1}\binom ns
\le\sum_{s=0}^{\lfloor\alpha n\rfloor}\binom ns
\le2^{h(\alpha)n}.
\]
For completeness, the last inequality follows by expanding $1=(\alpha+(1-\alpha))^n$: for $s\le\alpha n$, the factor $\alpha^s(1-\alpha)^{n-s}$ is at least $\alpha^{\alpha n}(1-\alpha)^{(1-\alpha)n}=2^{-h(\alpha)n}$.

For every term in~\eqref{eq_semifailure}, we also have $n-s\ge(1-\alpha)n$. Hence
\[
\Prob(T\text{ is not $k$-strong})
\le C n^{128}2^{\left(h(\alpha)-(1-\alpha)/2\right)n}=o(1).
\]
The estimate is uniform over $1\le k\le\alpha n$, and the assertion follows.
\end{proof}

The same argument as in Corollary~\ref{cor} gives the corresponding total degree statement in the same range of $k$.

\section{Proof of Theorem~\ref{thm2}}\label{sec4}

We use a second cut-counting argument, but with a different random orientation. Rather than orienting the digons independently, we orient the digons coherently along paths and cycles, so that their contributions to the indegree and outdegree differ by at most one
at each vertex. This balance forces every cut with no forward arc to have two large sides. As before, only polynomially many cuts need to be considered, and each has no forward arc with exponentially small probability.

We use the following standard consequence of Euler's theorem.

\begin{lemma}\label{decomposelemma}
Every finite simple graph has an edge decomposition into simple paths and
cycles such that each vertex is an endpoint of at most one path.
\end{lemma}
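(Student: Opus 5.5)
The plan is the classical Euler-type argument: pair up odd-degree vertices with auxiliary edges, take an Euler tour in each component, and cut the tours back into paths and cycles. Let $G$ be a finite simple graph, and let $O$ be its set of odd-degree vertices. By the handshake lemma $|O|$ is even, so we can pair the vertices of $O$ arbitrarily. We add a new auxiliary vertex $w$ joined by an edge to every vertex of $O$; call the result $G'$. Then every vertex of $G'$ has even degree: vertices in $O$ gain one, others are unchanged, and $w$ has degree $|O|$, which is even. A cleaner alternative that avoids multigraph issues is to treat each component of $G$ separately. Either way, each component of $G'$ is Eulerian and so has a closed Euler trail.

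Now take a component $C$ of $G'$ and its closed Euler trail. If $C$ contains $w$, delete $w$ from the trail. This splits the trail into walks in $G$ whose ends lie in $O$. Each vertex of $O$ is incident to exactly one auxiliary edge, so it appears as an end of exactly one such walk occurrence. If $C$ does not contain $w$, the trail is a closed walk in $G$. So the edge set of $G$ is partitioned into closed walks and open walks (trails). Every open trail has two distinct ends in $O$, and each vertex of $O$ is an end of exactly one trail, while vertices outside $O$ are ends of none. If the two ends of an open trail coincided, that vertex would be met by two auxiliary edges, which is impossible.

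Finally, we refine the trails into simple paths and cycles without creating new path endpoints. Take any trail, open or closed, that repeats a vertex. Choose a shortest closed subwalk between two occurrences of the same vertex; it is a simple cycle. Remove it and splice the trail at that vertex. The remainder is again a trail with the same ends, or a closed trail, and it has strictly fewer edges. We iterate until every open trail is a simple path and every closed trail is a simple cycle. Closed trails may also become empty or decompose entirely into cycles. No step changes the endpoints of open trails, so each vertex is still an endpoint of at most one path. This gives the required decomposition.

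The only delicate point is the bookkeeping around $w$. We must check that deleting $w$ from an Euler tour produces trails whose ends are exactly the vertices of $O$, each used once. We must also check that the cycle-extraction step never turns an interior vertex into a path endpoint. Both follow from the degree count above: each $v\in O$ meets exactly one auxiliary edge, and splicing preserves the two end vertices of a trail.
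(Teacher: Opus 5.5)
Your proof is correct and follows essentially the same route as the paper: make all degrees even, take closed Euler trails, cut them at the auxiliary edges to obtain trails whose ends are exactly the odd-degree vertices (each used once), and then peel off simple cycles without changing trail ends. The only difference is that you join all odd-degree vertices to a single new vertex $w$, whereas the paper adds auxiliary edges between paired odd vertices and allows them to be parallel; your version keeps the auxiliary graph simple, and it makes your opening sentence about pairing the vertices of $O$ (and the ``treat each component separately'' aside) superfluous.
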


\begin{proof}
It suffices to consider each nontrivial connected component separately. If all degrees are even, its edges can be decomposed into edge-disjoint cycles. Otherwise, pair the odd-degree vertices and add an auxiliary edge for each pair, allowing parallel edges. The resulting multigraph is Eulerian and hence has an Euler tour. Deleting the auxiliary edges from the tour gives edge-disjoint trails such that each odd-degree vertex is an endpoint
of exactly one trail and no other vertex is an endpoint.

For each such trail, repeatedly remove a simple cycle whenever a vertex is repeated. What remains is a simple path with the same endpoints, and the removed edges are decomposed into edge-disjoint cycles. This gives the required decomposition.
\end{proof}

Let $G_D$ be the \textbf{digon graph} of $D$, with vertex set $V(D)$ and an edge between two vertices precisely when they form a digon in $D$. Fix a decomposition of $G_D$ as in Lemma~\ref{decomposelemma}. Independently for each path or cycle, choose one of its two directions with probability $1/2$, and retain every single arc of $D$. Let $T$ denote the resulting random orientation of $D$.

For each vertex $v$, let $d_{\mathrm{dig}}^+(v)$ and $d_{\mathrm{dig}}^-(v)$ be the outdegree and indegree contributed by the oriented edges of $G_D$. Then
\begin{equation}\label{eqdiffer}
\bigl|d_{\mathrm{dig}}^+(v)-d_{\mathrm{dig}}^-(v)\bigr|\le1.
\end{equation}
Indeed, every cycle and every path on which $v$ is internal contributes equally to these degrees, and $v$ is an endpoint of at most one path.

We first prove a bipartite degree estimate for the cut bounds.

\begin{lemma}\label{orebipartite}
Let $F$ be a bipartite graph with nonempty parts $X,Y$ of sizes $a,b$, and let $q=e(F)$. Suppose that $0\le t\le a+b$ and $d_F(x)+d_F(y)\ge t$ whenever $x\in X$, $y\in Y$ and $xy\notin E(F)$. Then
$q\ge tab/(a+b)$.

\end{lemma}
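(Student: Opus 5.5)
We double count over the non-adjacent pairs between $X$ and $Y$. Write $d(v)=d_F(v)$, and let $N$ be the set of pairs $(x,y)\in X\times Y$ with $xy\notin E(F)$, so that $|N|=ab-q$. We sum the hypothesis $d(x)+d(y)\ge t$ over all pairs in $N$. A vertex $x\in X$ lies in exactly $b-d(x)$ such pairs, and a vertex $y\in Y$ lies in exactly $a-d(y)$ such pairs. Hence
\[
t(ab-q)\le\sum_{x\in X}d(x)\bigl(b-d(x)\bigr)+\sum_{y\in Y}d(y)\bigl(a-d(y)\bigr).
\]
Expanding the right-hand side gives $bq+aq-\sum_{x}d(x)^2-\sum_{y}d(y)^2$.

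Next we bound the sums of squares from below by Cauchy--Schwarz: $\sum_x d(x)^2\ge q^2/a$ and $\sum_y d(y)^2\ge q^2/b$. This yields
\[
t(ab-q)\le(a+b)q-q^2\Bigl(\frac1a+\frac1b\Bigr)=(a+b)q\Bigl(1-\frac{q}{ab}\Bigr).
\]
Write $r=q/ab\in[0,1]$ and divide by $ab$. The inequality becomes $t(1-r)\le(a+b)r(1-r)$.

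We finish by cases on $r$. If $r=1$, then $q=ab\ge tab/(a+b)$, because $t\le a+b$. If $r<1$, we may divide by $1-r>0$ to get $t\le(a+b)r$, which is exactly $q\ge tab/(a+b)$. The case $N=\varnothing$ is the same as $r=1$, so it needs no separate treatment.

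The only delicate point is the direction of the Cauchy--Schwarz step: the sums of squares appear with a minus sign, so the lower bound on them gives an upper bound on the right-hand side, which is what we need. After that, dividing by $1-r$ is the one place where the case $r=1$ has to be separated, and there the hypothesis $t\le a+b$ is what makes the bound hold.
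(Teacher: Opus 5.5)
Your proof is correct and follows essentially the same route as the paper: you sum the degree condition over the $ab-q$ nonedges, expand to $(a+b)q-\sum_x d(x)^2-\sum_y d(y)^2$, apply Cauchy--Schwarz, and cancel the factor $1-q/(ab)$, with the case $q=ab$ handled separately via $t\le a+b$. The only cosmetic difference is ordering: the paper disposes of $q=ab$ at the start, whereas you split on $r=q/(ab)$ at the end.
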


\begin{proof}
The assertion is immediate if $q=ab$. 
Suppose that $q<ab$. Summing the assumed degree condition over all nonedges between $X$ and $Y$ gives $t(ab-q)\le \sum_{\substack{x\in X,\ y\in Y\\xy\notin E(F)}}\bigl(d_F(x)+d_F(y)\bigr).$
For each $x\in X$, there are $b-d_F(x)$ non-neighbours of $x$ in $Y$, and similarly each $y\in Y$ has $a-d_F(y)$ non-neighbours in $X$.
Hence
\begin{align*}
    t(ab-q)
    &\le \sum_{x\in X}\bigl(b-d_F(x)\bigr)d_F(x)+\sum_{y\in Y}\bigl(a-d_F(y)\bigr)d_F(y)\\
    &=(a+b)q-\sum_{x\in X}d_F(x)^2-\sum_{y\in Y}d_F(y)^2,
\end{align*}
where we used $\sum_{x\in X}d_F(x)=\sum_{y\in Y}d_F(y)=q$. By the Cauchy--Schwarz inequality,  $\sum_{x\in X}d_F(x)^2\ge\frac{q^2}{a}$ and  $\sum_{y\in Y}d_F(y)^2\ge\frac{q^2}{b}$. Therefore
$$t(ab-q)\le(a+b)q-\frac{q^2}{a}-\frac{q^2}{b}=(a+b)q\left(1-\frac{q}{ab}\right).$$
Since $ab-q=ab(1-q/ab)$, and $q<ab$, the term $1-q/(ab)>0$. Cancelling it from both
sides gives $tab\le(a+b)q$, and hence $q\ge tab/(a+b)$.

\end{proof}

Let $S\subseteq V(D)$ with $|S|\le k-1$, and let $(A,B)$ be an ordered cut of $D-S$. Since every single arc is retained in $T$,  $e_T(A,B)=0$ is possible only if there is no single arc from $A$ to $B$. We call a triple $(A,S,B)$ with this property \textbf{eligible}. For such a triple, every arc from $A$ to $B$ belongs to a digon, so $e_D(A,B)$ is also the number of edges between $A$ and $B$ in $G_D$.

\begin{lemma}\label{orecut}
Let $n\ge2k$ and $\sigma_2(D)\ge n+2k-2$. For every eligible triple $(A,S,B)$,
\begin{equation}\label{eq_boundq}
e_D(A,B)\ge (2k-|S|)\cdot\frac{|A|\cdot|B|}{|A|+|B|}.
\end{equation}
If an orientation $T$ of $D$ satisfies~\eqref{eqdiffer} and $e_T(A,B)=0$, then
$e_D(A,B)\le \min\{|A|,|B|\}\cdot(|S|+1)$ and
$\min\{|A|,|B|\}\ge(n-|S|)/(k+1)$.

\end{lemma}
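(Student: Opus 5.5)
We prove the three claims in order, working in the digon graph $G_D$ restricted to $A\cup B$. Write $s=|S|$, $a=|A|$, $b=|B|$, so $a+b=n-s$.

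\textbf{Lower bound on $e_D(A,B)$.} Let $F$ be the bipartite graph on parts $A,B$ whose edges are the pairs $xy$ with $x\in A$, $y\in B$ and $xy\in A(D)$. Because the triple is eligible, every such arc lies in a digon, so $e(F)=e_D(A,B)$. Take a nonedge $xy$ of $F$. Then $xy$ is a missing arc of $D$, so $\sigma_2(D)\ge n+2k-2$ gives
\[
d_D^+(x)+d_D^-(y)\ge n+2k-2 .
\]
The out-neighbours of $x$ lie in $(A\setminus\{x\})\cup S$ or in $N_F(x)$. Hence $d_D^+(x)\le a-1+s+d_F(x)$, and similarly $d_D^-(y)\le b-1+s+d_F(y)$. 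Adding these,
\[
d_F(x)+d_F(y)\ge n+2k-2-(a+b)-2s+2=2k-s .
\]
This uses $a+b=n-s$. Set $t=2k-s$. Since $s\le k-1$, we have $t\ge0$. Also $t\le a+b$, since $t\le 2k\le n$ and $t-(a+b)=2k-n\le0$. So Lemma~\ref{orebipartite} applies and gives \eqref{eq_boundq}.

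\textbf{Upper bound when $e_T(A,B)=0$.} Suppose $e_T(A,B)=0$. Then every arc of $D$ from $A$ to $B$ is a digon, and $T$ orients each of these digons from $B$ to $A$. Take $x\in A$, let $r$ be the number of such digons at $x$, and let $g$ be the number of digons from $x$ to $A\cup S$. All $r$ digons are oriented into $x$, and the remaining digon edges at $x$ go to $A\cup S$, so
\[
d_{\mathrm{dig}}^-(x)\ge r \qquad\text{and}\qquad d_{\mathrm{dig}}^+(x)\le g .
\]
Now use \eqref{eqdiffer}, which says $d_{\mathrm{dig}}^-(x)\le d_{\mathrm{dig}}^+(x)+1$. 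The balance alone is too weak here, because $g$ can be as large as $a-1+s$. The bound must therefore combine balance with the degree hypothesis.

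\textbf{Main obstacle, and the first thing we would try.} Bound $d_D(x)$ in two ways. First, all arcs at $x$ lie in $A\cup S$ or are digons to $B$. Second, by \eqref{eqdiffer}, $r\le$ (digon out-degree of $x$) $+1\le |A|+s$. This is where the counting has to be made sharp. The intended bound is really that each vertex of the smaller side sends at most $s+1$ edges across. The argument we would attempt is to compare the cut with the total degrees $d_T^+(x)$ and $d_T^-(y)$: in $T$ we have $d_T^+(x)\le a-1+s$ for $x\in A$ and $d_T^-(y)\le b-1+s$ for $y\in B$.

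We would then sum over $A$ when $a\le b$ (respectively over $B$ otherwise). By \eqref{eqdiffer}, the digon contributions satisfy $d_{\mathrm{dig}}^-\le d_{\mathrm{dig}}^++1$. So the cross digons at $x$, all incoming, are matched by outgoing digon edges at $x$ inside $A\cup S$, up to one. Summing over $A$, the outgoing digon edges from $A$ into $A$ and into $S$ total at most $e(G_D[A])+e_{G_D}(A,S)$. Every internal edge of $A$ contributes one out and one in, so these cancel in the balance sum, which leaves
\[
e_D(A,B)\le |A|+e_{G_D}(A,S)\le |A|+|A|\,s=|A|(s+1).
\]
The same symmetric argument over $B$ gives $e_D(A,B)\le |B|(s+1)$, and together these give $e_D(A,B)\le \min\{a,b\}(s+1)$. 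This summed balance-plus-cancellation step is the key point to verify carefully.

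\textbf{Lower bound on $\min\{a,b\}$.} Combine the two bounds and use $\max\{a,b\}\ge (a+b)/2$:
\[
(2k-s)\frac{ab}{a+b}\le \min\{a,b\}(s+1),
\]
which gives
\[
(2k-s)\max\{a,b\}\le (s+1)(a+b).
\]
This yields only a bound on $\max\{a,b\}$, so instead rewrite the first inequality as
\[
(2k-s)\,\max\{a,b\}\le (s+1)(n-s).
\]
This bounds $\max\{a,b\}$ from above. Since $\min\{a,b\}=(n-s)-\max\{a,b\}$, we get
\[
\min\{a,b\}\ge (n-s)\Bigl(1-\frac{s+1}{2k-s}\Bigr)=(n-s)\,\frac{2k-2s-1}{2k-s}.
\]
For $s\le k-1$, the factor $\frac{2k-2s-1}{2k-s}$ is at least $\frac{1}{k+1}$, with equality at $s=k-1$. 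This gives $\min\{a,b\}\ge (n-|S|)/(k+1)$.
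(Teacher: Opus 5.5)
Your proof is correct and follows the paper's argument. You apply Lemma~\ref{orebipartite} to the cross digons in the same way, sum~\eqref{eqdiffer} over $A$ (and over $B$) so that internal edges cancel and the $A$--$S$ edges contribute at most $|A|\,|S|$, and combine the two bounds as the paper does, except that you solve exactly for $\min\{|A|,|B|\}$ and check $\frac{2k-2s-1}{2k-s}\ge\frac{1}{k+1}$ (equivalent to $(2k+1)(k-1-s)\ge0$) rather than passing through $|B|\le k|A|$. The exploratory detours in your second step should be cut: the single-vertex bounds, the total degrees in $T$, and the remark that each vertex of the smaller side sends at most $|S|+1$ edges across, which the balance condition gives only on average over the side, not per vertex.
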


\begin{proof}
Let $F$ be the bipartite subgraph of $G_D$ consisting of the edges between $A$ and $B$. If $x\in A$, $y\in B$ and $xy\notin E(F)$, eligibility implies that $xy\notin A(D)$. Since
\[
d_D^+(x)\le |A|-1+|S|+d_F(x)
\quad\text{and}\quad
d_D^-(y)\le |B|-1+|S|+d_F(y),
\]
%the Ore condition gives 
it follows from  $\sigma_2(D)\ge n+2k-2$ that
$d_F(x)+d_F(y)\ge2k-|S|$. Now $2k-|S|\le n-|S|=|A|+|B|$, so Lemma~\ref{orebipartite} yields~\eqref{eq_boundq}.

Suppose that $e_T(A,B)=0$, implying that every edge between $A$ and $B$ in $G_D$ is oriented from $B$ to $A$. Set $\Delta(v)=d_{\mathrm{dig}}^+(v)-d_{\mathrm{dig}}^-(v)$. In the sum of $\Delta(v)$ over $A$, the internal edges cancel, the edges between $A$ and $B$ contribute $-e_D(A,B)$, and the total contribution of the edges between $A$ and $S$ is at most $|A|\cdot|S|$ in absolute value. Together with~\eqref{eqdiffer}, this gives
\[
-|A|\le \sum_{v\in A}\Delta(v)\le -e_D(A,B)+|A|\cdot|S|,
\]
and hence $e_D(A,B)\le |A|\cdot(|S|+1)$.  Summing over $B$ similarly gives $e_D(A,B)\le |B|\cdot(|S|+1)$.

Assume without loss of generality that $|A|\le |B|$. Combining~\eqref{eq_boundq} with
$e_D(A,B)\le |A|\cdot(|S|+1)$ gives $(2k-|S|)|B|\le(|S|+1)(|A|+|B|)$, or equivalently $(2k-2|S|-1)|B|\le(|S|+1)|A|$. Since $|S|\le k-1$, we have $2k-2|S|-1\ge1$ and $|S|+1\le k$, and hence $|B|\le k|A|$. Therefore $n-|S|=|A|+|B|\le(k+1)|A|$, which gives $|A|\ge(n-|S|)/(k+1)$.
\end{proof}

Although the digons are no longer oriented independently, the paths and cycles in the decomposition are. This independence gives the following probability bound.

\begin{lemma}\label{prob}
For every eligible triple $(A,S,B)$ with $|S|\le k-1$,
\[
\Prob\bigl(e_T(A,B)=0\bigr)
\le 2^{-\lceil e_D(A,B)/(|S|+1)\rceil}.
\]
If $n\ge2k$, $\sigma_2(D)\ge n+2k-2$ and this event has positive probability, then
$e_D(A,B)/(|S|+1)\ge(n-|S|)/(k+1)$.
\end{lemma}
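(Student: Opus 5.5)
The plan is to reduce the event $e_T(A,B)=0$ to constraints on the independent random directions of the paths and cycles in the fixed decomposition of $G_D$ from Lemma~\ref{decomposelemma}, and then count how many of these pieces are involved. Let $F$ be the set of edges of $G_D$ between $A$ and $B$; by eligibility, $|F|=e_D(A,B)$. The event $e_T(A,B)=0$ holds if and only if every edge of $F$ is oriented from $B$ to $A$. Each edge of $F$ lies in exactly one path or cycle $P$ of the decomposition. Once the direction of $P$ is fixed, the orientation of every edge of $P$ is determined. So for each piece $P$ meeting $F$, at most one of its two directions is compatible with the event. If both directions fail, the probability is $0$ and the bound is trivial. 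By independence of the pieces, $\Prob(e_T(A,B)=0)\le 2^{-N}$, where $N$ is the number of pieces containing at least one edge of $F$.

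The key step, and the only real obstacle, is to show $N\ge\lceil |F|/(|S|+1)\rceil$, that is, that each piece contains at most $|S|+1$ edges of $F$ whenever the event has positive probability. Fix a piece $P$, traversed in its compatible direction, and suppose all its $F$-edges go from $B$ to $A$. Between two consecutive $F$-edges along $P$, the walk must return from $A$ to $B$. It cannot do so via a direct $A$-to-$B$ edge, since every such edge is in $F$ and is traversed $B\to A$. Hence it must pass through a vertex of $S$. Because $P$ is a simple path or cycle, each vertex of $S$ is visited at most once. For a path, there are therefore at most $|S|$ returns and so at most $|S|+1$ edges of $F$. For a cycle, the number of returns equals the number of $F$-edges, so there are at most $|S|\le|S|+1$. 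Integrality then gives the ceiling. I will check carefully that a visit to $S$ is really forced, namely that no path from $A$ back to $B$ avoids both $F$ and $S$; this holds because $V(D)=A\cup B\cup S$.

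For the second assertion, assume $n\ge2k$ and $\sigma_2(D)\ge n+2k-2$, and that the event has positive probability. Then some orientation $T$ produced by the construction satisfies~\eqref{eqdiffer} and $e_T(A,B)=0$. Lemma~\ref{orecut} then gives $\min\{|A|,|B|\}\ge(n-|S|)/(k+1)$. To obtain the claimed bound on $e_D(A,B)/(|S|+1)$, I will combine~\eqref{eq_boundq} with $|A|+|B|=n-|S|$. Assuming $|A|\le|B|$, we have $|A||B|/(|A|+|B|)\ge|A|/2$, so $e_D(A,B)\ge(2k-|S|)|A|/2$. Since $|S|\le k-1$, we get $(2k-|S|)/2\ge (k+1)/2$, and comparing with $|S|+1\le k$ gives roughly $e_D(A,B)/(|S|+1)\ge|A|/2$, which is off by a factor of two. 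So the constants need more care. A better route is to use $|B|\ge(n-|S|)/2$, which gives $|A||B|/(|A|+|B|)\ge |A|\cdot\tfrac12$ only. Instead I will write $|A||B|/(|A|+|B|)=|A|/(1+|A|/|B|)$ and use $(2k-2|S|-1)|B|\le(|S|+1)|A|$ from the proof of Lemma~\ref{orecut} to balance the two factors. If this still falls short, I will fall back on whatever form of the bound the subsequent counting actually needs, possibly a constant multiple of it.
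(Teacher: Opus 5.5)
Your argument for the probability bound is correct and is essentially the paper's. Each piece meeting the cut has at most one compatible direction. A compatible path (resp.\ cycle) carries at most $|S|+1$ (resp.\ $|S|$) edges between $A$ and $B$, since consecutive crossings must be separated by a return from $A$ to $B$ through a distinct vertex of $S$. Independence of the pieces then gives $2^{-\lceil e_D(A,B)/(|S|+1)\rceil}$.

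The second assertion, however, is not proved. After getting $\min\{|A|,|B|\}\ge(n-|S|)/(k+1)$ from Lemma~\ref{orecut}, you bound $|A|\,|B|/(|A|+|B|)\ge|A|/2$. This uses $|B|\ge m/2$ (with $m=n-|S|$) independently of the lower bound on $|A|$. But when $|A|$ is near $m/(k+1)$, the other side $|B|=m-|A|$ is near $km/(k+1)$, not $m/2$. The decoupled bound only yields $e_D(A,B)/(|S|+1)\ge m/(2k)$ when $|S|=k-1$, which falls short of $m/(k+1)$ for $k\ge2$. Your proposed repair via $|A|/(1+|A|/|B|)$ is not carried out; with $|A|/|B|\le1$ it just reproduces $|A|/2$. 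Settling for ``whatever the later counting needs'' does not prove the stated inequality, even though a weaker exponent would in fact suffice for Theorem~\ref{thm2}.

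The missing step is to keep the two sides coupled. The function $|A|(m-|A|)$ is concave in $|A|$, so on the allowed range $m/(k+1)\le|A|\le km/(k+1)$ it is minimized at an endpoint. This gives
\[
\frac{|A|\,|B|}{|A|+|B|}\ge\frac{k\,m}{(k+1)^2}.
\]
Inserting this into \eqref{eq_boundq} gives
\[
\frac{e_D(A,B)}{|S|+1}\ge\frac{(2k-|S|)k}{(|S|+1)(k+1)}\cdot\frac{m}{k+1}.
\]
Moreover, $(2k-|S|)k\ge(|S|+1)(k+1)$ for $|S|\le k-1$, with equality at $|S|=k-1$, which yields exactly $(n-|S|)/(k+1)$.

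Your idea of using $(2k-2|S|-1)|B|\le(|S|+1)|A|$ can also be completed. It gives $\min\{|A|,|B|\}\ge\frac{2k-2|S|-1}{2k-|S|}\,m$, and then leads to the same conclusion. But this works only when combined with the monotonicity of $x(m-x)$ on $[0,m/2]$, not with $|A|/|B|\le1$.
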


\begin{proof}
Consider a path or cycle $P$ in the fixed decomposition of $G_D$, oriented in a direction compatible with $e_T(A,B)=0$. Every crossing between $A$ and $B$ along $P$ must be from $B$ to $A$. Between consecutive crossings, $P$ must therefore return from $A$ to $B$ through $S$. These returns use distinct vertices of $S$, since $P$ is simple. A path thus has at most $|S|+1$ edges between $A$ and $B$. For a cycle, the same argument applies cyclically and gives at most $|S|$ such edges.

If the event $e_T(A,B)=0$ is possible, then at least
$\lceil e_D(A,B)/(|S|+1)\rceil$ paths or cycles of the decomposition therefore contain an edge between $A$ and $B$. Each has at most one direction compatible with the event. Their directions are chosen independently, proving the probability bound.

For the second assertion, Lemma~\ref{orecut} gives
$|A|,|B|\ge(n-|S|)/(k+1)$. Together with
$|A|+|B|=n-|S|$, this implies
$|A|\cdot|B|/(|A|+|B|)\ge k(n-|S|)/(k+1)^2$. Hence~\eqref{eq_boundq} yields
\[
\frac{e_D(A,B)}{|S|+1}
\ge\frac{2k-|S|}{|S|+1}\cdot\frac{k(n-|S|)}{(k+1)^2}
\ge\frac{n-|S|}{k+1},
\]
where the last inequality follows from $|S|\le k-1$.
\end{proof}

We next bound the number of cuts with $e_T(A,B)=0$. As in Section~\ref{subsec1}, we fix one cut with few forward arcs and show that the first side of every other such cut differs from one of four fixed sets in only a bounded number of vertices.

\begin{lemma}\label{orelowcuts}
For every fixed positive integer $k$, there is a constant $R_k$ such that the following holds for all sufficiently large $n$. Let $D$ be an $n$-vertex digraph with $\sigma_2(D)\ge n+2k-2$, and fix $S\subseteq V(D)$ with $|S|\le k-1$. There are at most $4\sum_{i=0}^{2R_k}\binom ni$ ordered cuts $(A,B)$ of $D-S$ such that $(A,S,B)$ is eligible, $e_D(A,B)<2kn$, and the event $e_T(A,B)=0$ has positive probability.
\end{lemma}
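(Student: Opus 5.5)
Mirroring the low-cut count in Lemma~\ref{randomlemma}, with Lemma~\ref{orecut} supplying the balance. Call a cut \emph{relevant} if it satisfies the three conditions in the statement. By Lemma~\ref{orecut}, every relevant cut has $\min\{|A|,|B|\}\ge(n-|S|)/(k+1)\ge n/(2k+2)$ for large $n$, so both sides are linear in $n$. If no relevant cut exists, the bound is trivial. Otherwise we fix one relevant cut $(A_0,B_0)$ and show that for every relevant $(A,B)$ there is $Z\in\{\varnothing,A_0,B_0,A_0\cup B_0\}$ with $|A\symdiff Z|\le 2R_k$. Since $A$ determines the cut once $S$ is fixed, this gives at most $4\sum_{i=0}^{2R_k}\binom ni$ cuts.

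\textbf{Step 1: few missing arcs inside $A_0$ and $B_0$.} Let $M_{A_0}$ be the number of ordered pairs $(u,v)$ of distinct vertices of $A_0$ with $uv\notin A(D)$. For each such pair, $\sigma_2(D)\ge n+2k-2$ gives $d^+_D(u)+d^-_D(v)\ge n+2k-2$. I would instead use a cheaper averaging argument. Pick $b\in B_0$ with $d_F(b)$ minimal, where $F$ is the digon graph restricted between $A_0$ and $B_0$. Then $d_F(b)\le e_D(A_0,B_0)/|B_0|<2kn/(n/(2k+2))=O(k^2)$. Every $u\in A_0$ that is not an $F$-neighbour of $b$ has $ub\notin A(D)$, by eligibility. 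Hence $d^+_D(u)\ge n+2k-2-d^-_D(b)$, and $d^-_D(b)\le |B_0|-1+|S|+d_F(b)$. This gives $d^+_D(u)\ge |A_0|+O(k^2)$ in the sense that $u$ misses at most $O(k^2)$ out-neighbours inside $A_0$, after accounting for its at most $|S|$ arcs into $S$ and its $d_F(u)$ arcs into $B_0$.

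Summing over $u$, and using $\sum_u d_F(u)=e_D(A_0,B_0)<2kn$, gives $M_{A_0}=O(k^2 n)$. The $O(k^2)$ exceptional vertices contribute at most $O(k^2n)$ more. The symmetric argument with in-degrees gives $M_{B_0}=O(k^2n)$.

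\textbf{Step 2: a relevant cut splits $A_0$ and $B_0$ lopsidedly.} Let $(A,B)$ be relevant and $x=|A\cap A_0|$. Every arc of $D$ from $A\cap A_0$ to $B\cap A_0$ is counted in $e_D(A,B)$. Hence $x(|A_0|-x)\le e_D(A,B)+M_{A_0}\le C_k n$. Since $|A_0|\ge n/(2k+2)$, we get $t=\min\{x,|A_0|-x\}\le t(|A_0|-t)/(|A_0|/2)\le 2C_kn/|A_0|\le 4(k+1)C_k=:R_k$. The same holds inside $B_0$. Vertices of $S$ are excluded from both sides, so $A$ agrees with one of the four choices of $Z$ up to at most $2R_k$ vertices.

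\textbf{Main obstacle.} The delicate step is Step~1. Unlike in Lemma~\ref{randomlemma}, we have no semi-degree lower bound, only the pairwise Woodall condition, which constrains only pairs that are non-adjacent. The key trick is to exploit eligibility, which forces all of $A_0\times B_0$ minus the digon edges to be missing arcs, together with a single low-degree vertex $b$ on the other side to convert the Woodall sum into an individual out-degree bound. Some care is needed to handle vertices adjacent to $b$, but there are only $O(k^2)$ of them, each contributing at most $n$ missing arcs. The constant $R_k$ then depends only on $k$, as required.
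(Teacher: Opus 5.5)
Your proof is correct. Its skeleton matches the paper's: fix one relevant cut $(A_0,B_0)$, use Lemma~\ref{orecut} to make both sides linear, bound $M_{A_0},M_{B_0}$ by $O(k^2n)$, then show every relevant cut splits $A_0$ and $B_0$ lopsidedly.

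The difference is in Step~1. The paper sums the Woodall inequality over \emph{all} missing arcs from $A$ to $B$. It bounds $\sum_{x\in A}d^+_D(x)$ and $\sum_{y\in B}d^-_D(y)$ in terms of $M_A$, $M_B$ and $e_D(A,B)$. This gives the single inequality $|B|M_A+|A|M_B\le e_D(A,B)(2n+2k-|S|-2)-(2k-|S|)|A|\,|B|$, which controls both $M_A$ and $M_B$ at once with no exceptional vertices.

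You instead take one witness $b\in B_0$ of minimum $F$-degree, so $d_F(b)<4k(k+1)$, and turn the pairwise condition into a pointwise one. For $u\in A_0\setminus N_F(b)$, eligibility gives $ub\notin A(D)$ and hence $d^+_D(u)\ge|A_0|+2k-1-d_F(b)$. So $u$ misses at most $d_F(b)+d_F(u)$ out-arcs inside $A_0$. Treating the at most $d_F(b)$ neighbours of $b$ crudely gives $M_{A_0}\le 2|A_0|d_F(b)+e_D(A_0,B_0)<(8k(k+1)+2k)n$. A second witness $a\in A_0$ handles $M_{B_0}$ in the same way.

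What each route buys: yours is more local and gives more structure. All but $O(k^2)$ vertices satisfy an almost-semi-degree bound, which is closer in spirit to Lemma~\ref{randomlemma}. The cost is an averaging step, separate handling of the witness's neighbours, and two symmetric runs. The paper's double count is shorter and symmetric.

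Two cosmetic fixes. ``$d^+_D(u)\ge|A_0|+O(k^2)$'' should read $|A_0|-O(k^2)$, or better, the exact bound above. Also take $R_k$ to be an integer, e.g.\ $\lceil 4(k+1)C_k\rceil$, so that the summation limit $2R_k$ makes sense.
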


\begin{proof}
If there is no such cut, the assertion is immediate. Otherwise, fix one, say $(A,B)$. By Lemma~\ref{orecut},
\begin{equation}\label{linearsize}
|A|,|B|\ge\frac{n-|S|}{k+1}.
\end{equation}
For $X\in\{A,B\}$, let
$M_X=|X|\cdot(|X|-1)-e_D(X)$ be the number of arcs missing from $D[X]$.

There are $|A|\cdot|B|-e_D(A,B)$ missing arcs from $A$ to $B$. Summing the Woodall-type degree inequalities over these missing arcs gives 
\begin{align*}
(n+2k-2)\bigl(|A|\cdot|B|-e_D(A,B)\bigr)
&\le
\sum_{\substack{x\in A,\ y\in B\\ xy\notin A(D)}}
\bigl(d_D^+(x)+d_D^-(y)\bigr)\\
&\le
|B|\sum_{x\in A}d_D^+(x)
+|A|\sum_{y\in B}d_D^-(y)\\
&\le
|B|\Bigl(|A|\cdot(|A|-1)-M_A
        +|A|\cdot|S|+e_D(A,B)\Bigr)\\
&\quad+
|A|\Bigl(|B|\cdot(|B|-1)-M_B
        +|B|\cdot|S|+e_D(A,B)\Bigr).
\end{align*}
Using $|A|+|B|=n-|S|$, the right-hand side equals
\[
|A|\cdot|B|\,(n+|S|-2)-|B|M_A-|A|M_B
 +(n-|S|)\cdot e_D(A,B).
\]
Therefore, rearranging gives $|B|M_A+|A|M_B\le e_D(A,B)\cdot(2n+2k-|S|-2) -(2k-|S|)|A|\cdot|B|$.

For sufficiently large $n$, \eqref{linearsize} gives
$|A|,|B|\ge n/(2(k+1))$, and $2n+2k-|S|-2\le3n$. Since
$e_D(A,B)<2kn$, the preceding inequality, after discarding its nonpositive last term, gives $|B|M_A+|A|M_B<6kn^2$. Since $M_A,M_B\ge0$, we obtain $M_A<6kn^2/|B|\le12k(k+1)n$ and, similarly, $M_B<6kn^2/|A|\le12k(k+1)n$.

Let $(X,Y)$ be any other cut counted by the lemma, and set $x=|A\cap X|$. Of the $x(|A|-x)$ possible arcs from $A\cap X$ to $A\cap Y$, at most $M_A$ are missing. Every remaining arc is counted by $e_D(X,Y)$, which is less than $2kn$. Consequently, $x(|A|-x)<\bigl(12k(k+1)+2k\bigr)n$.
Put $C_k=12k(k+1)+2k$ and $R_k=\lceil4(k+1)C_k\rceil$. By~\eqref{linearsize}, $|A|\ge n/(2(k+1))$ for sufficiently large $n$. Thus, writing $r=\min\{x,|A|-x\}$, we have
\[
x(|A|-x)=r(|A|-r)\ge\frac{r|A|}{2}\ge\frac{rn}{4(k+1)},
\]
and hence $r<R_k$. The same argument inside $B$ shows that $X$ contains fewer than $R_k$ vertices of $B$ or omits fewer than $R_k$ vertices of $B$.

It follows that $X$ differs in fewer than $2R_k$ vertices from one of $\varnothing$, $A$, $B$ and $A\cup B$. There are at most $4\sum_{i=0}^{2R_k}\binom ni$ choices for $X$, and each determines $Y$. This proves the lemma.
\end{proof}

\woodallthm*

\begin{proof}
Fix $k$ and take $n$ sufficiently large. Let $T$ be the random orientation constructed above. If $T$ is not $k$-strong, then some set $S\subseteq V(D)$ with $|S|\le k-1$ and some ordered cut $(A,B)$ of $D-S$ satisfy $e_T(A,B)=0$. Such a triple is eligible.

Consider only triples for which this event has positive probability. By Lemma~\ref{orecut},
\[
e_D(A,B)\le (|S|+1)\cdot\min\{|A|,|B|\}
\le\frac{k(n-|S|)}2<2kn.
\]
Lemma~\ref{orelowcuts} therefore bounds the number of these cuts by $n^{O_k(1)}$ for each fixed $S$. There are at most $\sum_{i=0}^{k-1}\binom ni\le kn^{k-1}$ choices for $S$, so the total number of triples is still $n^{O_k(1)}$.

For each such triple, Lemma~\ref{prob} bounds the probability of $e_T(A,B)=0$ by $2^{-(n-k+1)/(k+1)}$. The union bound now gives
\[
\Prob(T\text{ is not $k$-strong})
\le n^{O_k(1)}2^{-(n-k+1)/(k+1)}=o(1).
\]
For sufficiently large $n$, this probability is less than one, so $D$ admits a $k$-strong orientation.
\end{proof}

\section{Remarks}\label{sec5}

\medskip
\noindent\textbf{Sparse spanning subdigraphs.}
For fixed $k$ and all sufficiently large $n$, Theorem~\ref{thm1} gives a $k$-strong orientation of $D$. Applying Mader's theorem~\cite{Mader} to this orientation yields a spanning $k$-strong oriented subdigraph with at most $2k(n-k)$ arcs. On the other hand, every $k$-strong digraph has minimum outdegree at least $k$, and hence at least $kn$ arcs. It would be interesting to determine whether the same semi-degree condition guarantees a spanning $k$-strong oriented subdigraph with $kn+o(n)$ arcs for fixed $k$. For related results on split digraphs, see~\cite{Zhou2}.

\medskip
\noindent\textbf{Meyniel-type degree conditions.}
It is natural to ask for an analogue of Theorem~\ref{thm2} in terms of the
total degrees of nonadjacent vertices. Unlike the Woodall-type degree condition,
however, such a condition does not by itself imply high vertex-connectivity:
it is vacuous for semicomplete digraphs. Thus an additional connectivity
assumption is necessary.

\begin{conjecture}\label{meynielconj}
Let $k\ge1$ and $n\ge2k+1$. Every $(2k-1)$-strong digraph $D$ on $n$ vertices satisfying $d_D(x)+d_D(y)\ge 2n+4k-5$ for every pair of nonadjacent vertices $x,y$ admits a $k$-strong orientation.
\end{conjecture}

For $k=1$, the degree condition in Conjecture~\ref{meynielconj} is precisely Meyniel's condition for hamiltonicity~\cite{Meyniel}. For symmetric digraphs, it reduces, by parity, to the sharp Woodall-type degree condition $d_G(x)+d_G(y)\ge n+2k-2$ for every pair of nonadjacent vertices in the underlying graph $G$.

The degree bound in Conjecture~\ref{meynielconj} is best possible. Indeed, consider the symmetric digraph $D$ from the construction in Proposition~\ref{prop2}. The exceptional vertex $v$ has $2k-1$ neighbours in the underlying graph, and hence $D$ is $(2k-1)$-strong. For every vertex $y$ nonadjacent to $v$, $d_D(v)+d_D(y)=2(2k-1)+2(n-2)=2n+4k-6$. However, as in the proof of Proposition~\ref{prop2}, every orientation of $D$ has indegree or outdegree at most $k-1$ at $v$, and therefore no orientation of $D$ is $k$-strong.

For semicomplete digraphs the degree condition is vacuous. Hence Conjecture~\ref{meynielconj} would in particular imply the conjecture of Bang-Jensen and Jord\'an that every $(2k-1)$-strong semicomplete digraph on at least $2k+1$ vertices contains a spanning $k$-strong tournament~\cite{Joergen3}. Moreover, their examples of $(2k-2)$-strong semicomplete digraphs with no spanning $k$-strong tournament show that the connectivity assumption in Conjecture~\ref{meynielconj} is also best possible.
\medskip

\medskip
\noindent\textbf{\large AI Declaration.}
ChatGPT was used in exploratory discussions of proof strategies. In particular, it suggested the quantitative extension in Proposition~\ref{semiprop} and the balanced path (cycle) orientation used in the proof of Theorem~\ref{thm2}. The authors independently verified all statements and proofs.


\begin{thebibliography}{99}
\small
\setlength{\itemsep}{0pt}

\bibitem{Aksoy}
S.~Aksoy and P.~Horn,
Graphs with many strong orientations,
\emph{SIAM J. Discrete Math.} \textbf{30} (2016), 1269--1282.

\bibitem{Bang1}
J.~Bang-Jensen,
Locally semicomplete digraphs: A generalization of tournaments,
\emph{J. Graph Theory} \textbf{14} (1990), 371--390.


\bibitem{Joergen}
J.~Bang-Jensen,
$k$-strong spanning local tournaments in locally semicomplete digraphs,
\emph{Discrete Appl. Math.} \textbf{157} (2009), 2536--2540.


\bibitem{Joergen2}
J.~Bang-Jensen and G.~Gutin,
\emph{Digraphs: Theory, Algorithms and Applications}, 2nd ed., Springer, London, 2009.


\bibitem{Joergen3}
J.~Bang-Jensen and T.~Jord\'an,
Spanning $2$-strong tournaments in $3$-strong semicomplete digraphs,
\emph{Discrete Math.} \textbf{310} (2010), 1424--1428.


\bibitem{Boesch}
F.~Boesch and R.~Tindell,
Robbins's theorem for mixed multigraphs,
\emph{Amer. Math. Monthly} \textbf{87} (1980), 716--719.


\bibitem{Frank}
A.~Frank,
On the orientation of graphs,
\emph{J. Combin. Theory Ser. B} \textbf{28} (1980), 251--261.

\bibitem{Frank1996}
A.~Frank,
\newblock Orientations of graphs and submodular flows,
\newblock \emph{Congr. Numer.} 113 (1996), 111--142.


\bibitem{Garamvolgyi}
D.~Garamv\"olgyi, T.~Jord\'an, C.~Kir\'aly and S.~Vill\'anyi,
Highly connected orientations from edge-disjoint rigid subgraphs,
\emph{Forum Math. Pi} \textbf{13} (2025), e11.


\bibitem{FJP}
M.~Ferrara, M.~Jacobson and F.~Pfender,
Degree conditions for $H$-linked digraphs,
\emph{Combin. Probab. Comput.} \textbf{22} (2013), 684--699.


\bibitem{Guo}
Y.~Guo,
Spanning local tournaments in locally semicomplete digraphs,
\emph{Discrete Appl. Math.} \textbf{79} (1997), 119--125.


\bibitem{Jackson}
B.~Jackson,
Some remarks on arc-connectivity, vertex splitting, and orientation in graphs and digraphs,
\emph{J. Graph Theory} \textbf{12} (1988), 429--436.


\bibitem{Jordan}
T.~Jord\'an,
On the existence of $k$ edge-disjoint $2$-connected spanning subgraphs,
\emph{J. Combin. Theory Ser. B} \textbf{95} (2005), 257--262.



\bibitem{Mader}
W.~Mader,
Minimal $n$-fach zusammenh\"angende Digraphen,
\emph{J. Combin. Theory Ser. B} \textbf{38} (1985), 102--117.

\bibitem{Meyniel}
M.~Meyniel,
Une condition suffisante d'existence d'un circuit hamiltonien dans un
graphe orient\'e,
\emph{J. Combin. Theory Ser. B} \textbf{14} (1973), 137--147.


\bibitem{NashWilliams}
C.~St.~J.~A.~Nash-Williams,
On orientations, connectivity and odd-vertex-pairings in finite graphs,
\emph{Canad. J. Math.} \textbf{12} (1960), 555--567.


\bibitem{Robbins}
H.~E.~Robbins,
A theorem on graphs, with an application to a problem of traffic control,
\emph{Amer. Math. Monthly} \textbf{46} (1939), 281--283.


\bibitem{Thomassen1989}
C.~Thomassen,
Configurations in graphs of large minimum degree, connectivity, or chromatic number,
\emph{Ann. New York Acad. Sci.} \textbf{555} (1989), 402--412.


\bibitem{Thomassen2015}
C.~Thomassen,
Strongly $2$-connected orientations of graphs,
\emph{J. Combin. Theory Ser. B} \textbf{110} (2015), 67--78.


\bibitem{Woodall}
D.~R.~Woodall,
Sufficient conditions for circuits in graphs,
\emph{Proc. London Math. Soc.} (3) \textbf{24} (1972), 739--755.


\bibitem{Wang}
K.~Wang, Y.~Qi and J.~Yan,
Spanning $3$-strong tournaments in $5$-strong semicomplete digraphs,
\emph{Discrete Math.} \textbf{347} (2024), 113664.


\bibitem{Zhou1}
J.~Zhou, J.~Bang-Jensen, T.~Zhou and J.~Yan,
Highly connected spanning oriented subdigraphs in generalizations of semicomplete digraphs,
\href{https://arxiv.org/abs/2607.17150v2}{arXiv:2607.17150v2} (2026).

\bibitem{Zhou2}
J.~Zhou, J.~Bang-Jensen and J.~Yan,
Sparse spanning $k$-strong oriented subdigraphs in split digraphs,
\href{https://arxiv.org/abs/2608.11578v1}{arXiv:2608.11578v1} (2026).

\end{thebibliography}
\end{document}